\NewDocumentCommand{\INTERVALINNARDS}{ m m }{
	#1 {,} #2
}
\NewDocumentCommand{\interval}{ s m >{\SplitArgument{1}{,}}m m o }
{
	\IfBooleanTF{#1}{
		\left#2 \INTERVALINNARDS #3 \right#4
	}{
		\IfValueTF{#5}{
			#5{#2} \INTERVALINNARDS #3 #5{#4}
		}{
			#2 \INTERVALINNARDS #3 #4
		}
	}
}
\newtheorem{theorem}{Theorem}[section]
\newtheorem{corollary}{Corollary}[theorem]
\newtheorem{definition}{Definition}[section]
\newtheorem{lemma}[theorem]{Lemma}
\newtheorem{example}{Example}[section]
\newtheorem{Conjecture}{Conjecture}
\newtheorem{Observation}{Observation}[section]
\newtheorem{Remark}{Remark}[section]
\begin{document} 
	\title{\textbf{ Symmetry and dynamics of Chebyshev's method}}
	\author[1]{Tarakanta Nayak\footnote{tnayak@iitbbs.ac.in}}
	\author[1]{Soumen Pal\footnote{Corresponding author, sp58@iitbbs.ac.in}}
	\affil[1]{School of Basic Sciences, 
		Indian Institute of Technology Bhubaneswar, India}
	\date{}
	\maketitle
	\begin{abstract}
    The set of all holomorphic Euclidean isometries  preserving the Julia set of a rational map $R$ is denoted by $\Sigma R$. It is shown in this article that if a root-finding method $F$ satisfies the Scaling theorem, i.e., for a polynomial $p$, $F_p$ is affine conjugate to $F_{\lambda p \circ T}$ for every nonzero complex number $\lambda $ and every affine map $T$,  then for a centered polynomial $p$ of order at least two (which is not a monomial), $\Sigma p\subseteq \Sigma F_p$. As the Chebyshev's method  satisfies the Scaling theorem, we have  $\Sigma p \subseteq \Sigma {C_p}$, where $p$ is a centered polynomial. The rest part of this article is devoted to explore the situations where the equality holds and in the process, the dynamics of $C_p$ is found.  We show that  the Julia set $\mathcal{J}(C_p)$ of $ C_p$ can never be a line. If a centered polynomial $p$ is (a) unicritical, (b) having exactly two roots with the  same multiplicity, (c) cubic and $\Sigma p$ is non-trivial or (d) quartic, $0$ is a root of $p$ and $\Sigma p $ is non-trivial then it is proved that $\Sigma p = \Sigma C_p$. It is found in all these cases that  the Fatou set $\mathcal{F}(C_p)$ is the union of all the attracting basins of $C_p$ corresponding to the roots of $p$ and  $\mathcal{J}(C_p)$ is connected. It is observed that $\mathcal{J}(C_p)$ is locally connected in all these cases.
	\end{abstract}
	\textit{Keyword:}
	Root-finding methods; Euclidean isometry; Fatou and Julia sets; Symmetry; Chebyshev's method.\\
	AMS Subject Classification: 37F10, 65H05
\section{Introduction}
	Finding the roots of a polynomial is a  classical problem in mathematical sciences. A root-finding method $F$ is a function that associates  a given polynomial $p$ with a rational map $F_p:\widehat{\mathbb{C}}=\mathbb{C}\cup\{\infty\}  \to \widehat{\mathbb{C}}$ such that every root of $p$ is an attracting fixed point  of $F_p$. Here a point $z$ is said to be an attracting fixed point of $F_p$ if $F_p(z)=z$ and $|F' _{p}(z)|<1$.	Let $F_p^n$ denote the $n$-times composition of $F_p$. For a root-finding method $F_p$, the sequence $\{F_p^n(z)\}_{n\geq 0}$ is supposed to converge to a root of $p$, at least in a neighborhood of the root. Any such point $z$ is often termed as an \textit{initial guess}. The challenge lies in making a right guess. The set of all $z \in \widehat{\mathbb{C}}$ for which $\{F_p^n(z)\}_{n\geq 0}$ converges to a root $z_0$ of $p$ is known as the basin of attraction of $z_0$.  Such a basin is always an open set but is not connected in general. The connected component of the basin of $z_0$ containing $z_0$ is known as the immediate basin of $z_0$. The union of all the basins corresponding to the roots of $p$ is precisely the set of right guesses. However, the complement of this set may contain an open set and that calls for a careful analysis of $\{F_p^n(z)\}_{n\geq 0}$ for all $z$. This is a primary motivation for studying the iteration of rational maps. 

\par Given a non-constant rational map $R$ with degree at least two, the extended complex plane $\widehat{\mathbb{C}}$ is partitioned into two sets, namely the Fatou set and the Julia set of $R$. The Fatou set, denoted by $\mathcal{F}(R)$, is the collection of all points where $\{R^n\}_{n\geq 0}$ is equicontinuous. The complement of $\mathcal{F}(R)$ is the Julia set, which we denote by $\mathcal{J}(R)$. By definition, $\mathcal{F}(R)$ is open and $\mathcal{J}(R)$ is closed. Further details on these sets can be found in ~\cite{Beardon_book}. The Julia set is usually a fractal set with complicated topology and finding them is highly non-trivial. If the Julia set of certain rational map is preserved by some simple M\"{o}bius map then it becomes possible to understand the structure of the Julia set through the M\"{o}bius map. 
A holomorphic Euclidean isometry of the plane is a map of the form $ z\mapsto az+b$ for  $ a, b \in \mathbb{C}$ with  $|a|=1$. The set of all such maps preserving the Julia set of a rational map $R$  is fundamental for the purpose of this article.  
\begin{definition}
$\Sigma R= \{\sigma: \sigma(z) = az+b ~\mbox{for}~ a , b \in \mathbb{C}, |a|=1~\mbox{ such that}~  \sigma(\mathcal{J}(R))=\mathcal{J}(R)\} $.
\end{definition}
Note that $\Sigma R$ is a group under composition of functions and is known as the symmetry group of $R$. It is said to be trivial if it contains the identity only. Each element of  $\Sigma R$ is called a symmetry of $\mathcal{J}(R)$. If $\Sigma R$ contains finite number of elements, then we define the order of $\Sigma R$ as the number of elements in $\Sigma R$ and we denote it as $o(\Sigma R)$. Note that every element of $\Sigma R$ permutes the Fatou components of $R$. The study of  $\Sigma R$ can be an indirect but useful way to understand $\mathcal{J}(R)$.  The main objective of this article is to understand the structure of the Julia set of a particular root-finding method, namely the Chebyshev's method.
\par  For a polynomial $p$, its Chebyshev's method, denoted by $C_p$ is defined as
\begin{equation}
C_{p}(z)=z-(1+\frac{1}{2}L_{p}(z))\frac{p(z)}{p'(z)},
\end{equation}
where $ L_{p}(z)=\frac{p(z)p''(z)}{[p'(z)]^{2}}
  $. This is a third order convergent method, i.e., the local degree of $C_p$ at each simple root of $p$ is at least $3$. For a linear polynomial or a monomial $p$, $C_p$ is either constant or a linear polynomial and, whenever $C_p$ is a linear polynomial, it is in fact a loxodromic M\"{o}bius map and every point except $\infty$ tends to $0$ under the iteration of $C_p$. For this reason, from now onwards, we consider polynomials that are not monomials and are of degree at least two.
Unlike the members of the well-studied K\"{o}nig's family, the Chebyshev's method can have an extraneous attracting fixed point (this is not a root of $p$). This may be a possible reason for which not much is known about its dynamics. Some results on the dynamics of Chebyshev's method  applied to unicritical polynomials $z \mapsto z^n +c$ can be found in \cite{CCV}. In \cite{Olivo2017}, Olivo et al. studied the extraneous fixed points of Chebyshev's method applied to some polynomials with real coefficients. The Julia sets of some root-finding methods, including Chebyshev's method are discussed by Kneisl \cite{Kneisl}. The article \cite{GV2020} reports some general properties of this method and proves the existence of attracting periodic points. The degree of $C_p$ is completely determined for all possible $p$ and the dynamics of $C_p$ is investigated for some cubic $p$ in   \cite{Nayak-Pal2022}.

\par	A map of the form $z \mapsto az+b$
for some $a,b \in \mathbb{C}, a \neq 0$ is called affine.
\begin{definition}
A root-finding method $F$ is said to satisfy the Scaling theorem if for every polynomial $p$, every $\lambda\in \mathbb{C}\setminus\{0\}$ and every affine map $T$, $T^{-1}\circ F_p \circ T=F_g$, where $g=\lambda p\circ T$. 
\end{definition}
Each  member of  K\"{o}nig's methods (see Lemma 8, \cite{BH2003}) and Chebyshev-Halley family (see Theorem 2.2, \cite{Nayak-Pal2022}) satisfies the Scaling theorem. Being a member of the  Chebyshev-Halley family, the Chebyshev's method satisfies the Scaling theorem. However,  there are methods like Stirling’s iterative method and Steffensen’s iterative method that do not satisfy the Scaling theorem (see \cite{ABP2004}). There is  an important relation between the symmetry group of $p$ and that of $F_p$ whenever $F_p$ satisfies the Scaling theorem. 
 The centroid $\xi_p $ of  a polynomial $p(z)=a_dz^d+a_{d-1}z^{d-1}+\dots +a_0$ is given by $ -\frac{a_{d-1}}{da_d}$. For every polynomial $p$, $\Sigma p$ is a rotation about $\xi_p$ (see Section 9.5,~\cite{Beardon_book}). A polynomial $p(z)=a_dz^d+a_{d-1}z^{d-1}+\dots +a_0$ is called monic or centered if its leading coefficient $a_d =1$ or its second leading coefficient $a_{d-1}=0$ respectively. A polynomial is called \textit{normalized} if it is monic and centered. We prove the following. 
  \begin{theorem}\label{symmetry-inclusion}
  	Suppose $F$ is a root-finding method satisfies the Scaling theorem. Then for a centered polynomial $p$ (which is not a monomial) with degree at least two, $\Sigma p \subseteq \Sigma F_p$.
  \end{theorem}
The above theorem is already known, but only in some special cases. A normalized polynomial $p$ can be written as $p(z)=z^\alpha p_0(z^\beta)$, where $p_0$ is a monic polynomial, $\alpha\in \mathbb{N}\cup \{0\}$ and $\beta\in \mathbb{N}$ are maximal for this expression. Then it is known that $\Sigma p=\{z\mapsto \lambda z: \lambda^\beta=1\}$ (Theorem 9.5.4 \cite{Beardon_book}). Yang considered the Newton method $N_p$ applied to a normalized polynomial $p$ and proved that $\Sigma p\subseteq \Sigma N_p$ (\cite{Yang2010}). He also proved that $\mathcal{J}(N_p)$ is a line if and only if $p(z)=c(z-a)^k(z-b)^k$, where $a,b,c\in \mathbb{C}$, $k\in \mathbb{N}$, $c\neq 0$ and $a\neq b$ (see Theorem 1.1 and 1.4, \cite{Yang2010}). Hence $\Sigma N_p$ contains a translation for this case. Liu and Gao prove that all these assertions of Yang are also true for every member of the K\"{o}nig's methods (see \cite{Liu-Gao2015}). Note that Theorem \ref{symmetry-inclusion} is not true in general if the polynomial is not taken to be centered (see Example \ref{example} in Section 2)
\par 
As the Chebyshev's method satisfies the Scaling theorem, we have the following consequence.
\begin{corollary}\label{Sym_C_p}
	For a centered polynomial $p$, $\Sigma p \subseteq \Sigma C_p$.
\end{corollary}
 A natural question is when the  equality holds. Since $\Sigma p$ does not contain any translation, the question of equality makes sense only when $\Sigma C_p$ does not contain any translation. This is in fact true.
  \begin{theorem}\label{no trans}
  	$\Sigma C_p$ does not contain any translation.
  \end{theorem}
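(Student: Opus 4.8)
My plan is to read off the behaviour of $C_p$ at $\infty$ and then run a hypothetical translation symmetry against it. First, as $z\to\infty$ one has (writing $d=\deg p$) that $p(z)/p'(z)=z/d+O(1)$ and $L_p(z)\to (d-1)/d$, so $C_p(z)=\Big(1-\tfrac{3d-1}{2d^2}\Big)z+O(1)=\mu_d\,z+O(1)$ with $\mu_d=\frac{(2d-1)(d-1)}{2d^2}$. Since $0<\mu_d<1$ for every $d\ge 2$, the point $\infty$ is a fixed point of $C_p$ whose multiplier, read in the coordinate $1/z$, is $\lambda_\infty=1/\mu_d=\frac{2d^2}{(2d-1)(d-1)}$; this is a real number in $(1,8/3]$, and it is never a positive integer. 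Hence $\infty$ is a \emph{repelling} fixed point, so $\infty\in\mathcal J(C_p)$ and $\mathcal J(C_p)\neq\widehat{\mathbb C}$; also, since $p$ has at least two distinct roots (otherwise $C_p$ is affine), $\mathcal F(C_p)$ contains the two disjoint immediate basins of two roots. (If the facts about $\infty$ are already established above as a lemma, one just quotes them.)

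Now suppose, for contradiction, that $\tau(z)=z+b$ with $b\neq 0$ lies in $\Sigma C_p$. As $\Sigma C_p$ is a group, $\langle\tau\rangle\subseteq\Sigma C_p$, so $\mathcal J(C_p)$ and $\mathcal F(C_p)$ are invariant under $z\mapsto z+nb$ for all $n\in\mathbb Z$ (and, by the Scaling theorem, $\tau\circ C_p\circ\tau^{-1}=C_q$ with $q(z)=p(z-b)$, so $C_p$ and $C_q$ have the same Julia set). Transferring to the chart $\zeta=1/z$, the set $\mathcal J(C_p)$ corresponds near $\zeta=0$ to a closed, nowhere dense set that is at the same time invariant under the parabolic M\"{o}bius map $M(\zeta)=\zeta/(1+b\zeta)$ (the image of $\tau$) and, through the K\"{o}nigs linearization of the repelling fixed point $0$ with multiplier $\lambda_\infty$, locally self-similar under multiplication by $1/\lambda_\infty$. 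I would argue that these two incompatible self-maps, combined with the facts that $\mathcal J(C_p)$ is uniformly perfect (Julia sets of rational maps are) and has empty interior, force $\mathcal J(C_p)$ to contain a non-degenerate analytic arc through $\infty$; by the rigidity of Julia sets containing an analytic arc --- such a Julia set, unless it is all of $\widehat{\mathbb C}$, lies on a circle or line of $\widehat{\mathbb C}$ --- this makes $\mathcal J(C_p)$ lie on a circle or line, and, being unbounded, it must be a line $\ell$.

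It remains to rule out $\mathcal J(C_p)=\ell$. Then $\mathcal F(C_p)$ is the union of the two open half-planes $H_\pm$ bounded by $\ell$, which (since $\ell=\mathcal J(C_p)$ is completely invariant) are completely invariant under $C_p$; each $H_\pm$ is conformally a disc and $C_p|_{H_\pm}$ is a finite Blaschke product of degree $\deg C_p\ge 2$, so it has at most one fixed point inside $H_\pm$. Thus $C_p$ has at most two fixed points off $\ell$. The roots of $p$ are attracting fixed points, hence lie in $\mathcal F(C_p)$ off $\ell$, so $p$ has exactly two distinct roots, and then $\infty$ and the two extraneous fixed points $e_1,e_2$ (the zeros of $pp''+2(p')^2$ other than roots of $p$) all lie on $\ell$. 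Normalizing the roots of $p$ to $\pm 1$ via the Scaling theorem, one computes that $e_1,e_2$ are the two roots of $d(3d-1)z^2+2(m_1-m_2)(3d-1)z+3(m_1-m_2)^2-d=0$ ($m_1,m_2$ the multiplicities), whose discriminant equals $16\,m_1m_2(3d-1)>0$; hence $e_1\neq e_2$ are real, so the line $\ell$ through $e_1,e_2,\infty$ is $\mathbb R\cup\{\infty\}$. But then the roots $\pm 1$ of $p$ would lie in $\mathcal J(C_p)=\mathbb R\cup\{\infty\}$, contradicting that they are attracting fixed points. This contradiction proves the theorem.

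The main obstacle is the implication in the second paragraph: passing from ``$\mathcal J(C_p)$ is invariant under the translation $\tau$ and surrounds the repelling fixed point at $\infty$'' to ``$\mathcal J(C_p)$ is a line''. Invariance under $M$ and under the K\"{o}nigs scaling does not by itself preclude a translation-periodic, Cantor-type configuration of $\mathcal J(C_p)$ near $\infty$; genuinely extracting an analytic arc requires the finer geometry of Julia sets --- uniformly bounded moduli of the complementary components and careful control of the linearizing coordinate at $\infty$ --- and that is where the real work of the proof lies.
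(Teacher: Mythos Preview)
Your third paragraph --- ruling out the possibility that $\mathcal J(C_p)$ is a line --- is essentially the paper's Lemma~\ref{CTI}: reduce to $p(z)=(z-1)^k(z+1)^m$, observe that the extraneous fixed points are real, so that any line Julia set must be $\mathbb R\cup\{\infty\}$, and then note that the attracting fixed points $\pm 1$ sit on $\mathbb R$, a contradiction. Your Denjoy--Wolff/Blaschke argument for placing $e_1,e_2$ on $\ell$ is actually neater than the paper's, which computes the multipliers $\lambda(e_i)$ explicitly and checks they exceed $1$.

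The gap is exactly where you say it is. Your second paragraph never gets off the ground: ``I would argue that these two incompatible self-maps \dots\ force $\mathcal J(C_p)$ to contain a non-degenerate analytic arc'' is an assertion, not a proof, and the closing paragraph concedes as much. Uniform perfectness and a K\"onigs linearization at $\infty$ are not enough to exclude translation-periodic Cantor structure; you would effectively have to reprove Boyd's classification to make this work.

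The paper simply \emph{cites} that classification. Boyd's theorem (packaged here as Lemma~\ref{Boyd}) says: if $\mathcal J(R)+a=\mathcal J(R)$ for some $a\neq 0$ and $\infty$ is (pre-)periodic, then $\mathcal J(R)$ is either $\widehat{\mathbb C}$ or a line. Since $\infty$ is a repelling fixed point of $C_p$ and $C_p$ has attracting fixed points (so $\mathcal J(C_p)\neq\widehat{\mathbb C}$), a translation in $\Sigma C_p$ would force $\mathcal J(C_p)$ to be a line. Then Lemma~\ref{CTI} finishes. If you replace your second paragraph by an appeal to Boyd, your proof is complete and coincides with the paper's.
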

Using a result of Boyd (Theorem 1,~\cite{Boyd2000}), it is seen that the Julia set of $C_p$ is a line whenever $\Sigma C_p$ contains a translation. Above theorem is proved by establishing that $\mathcal{J}(C_p)$ is never a line. This is done in Lemma~\ref{CTI} of this article.
\par It follows from Lemma~\ref{C1} (see Section 2) that each element  of  $\Sigma C_p$ is a rotation about a point $z_{C_p}$ depending on $C_p$.  Since $\Sigma p$ contains  rotations about its centroid,  the centroid $\xi_p$ must be $z_{C_p}$. Now, one may expect $\Sigma C_p =\Sigma p$!  This motivates a conjecture.
\begin{Conjecture}
If $p$ is a centered polynomial such that $\Sigma p$ is non-trivial then $\Sigma p =\Sigma  C_p $.
\label{conj}
\end{Conjecture}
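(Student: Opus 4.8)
The plan is to establish the missing inclusion $\Sigma C_p\subseteq\Sigma p$; the reverse inclusion is Theorem~\ref{symmetry-inclusion}. The one tool that is always available is the Scaling theorem: applied with the affine map $T=\sigma^{-1}$ and $\lambda=1$ it gives, for any affine $\sigma$,
\[\sigma\circ C_p\circ\sigma^{-1}=C_q,\qquad q=p\circ\sigma^{-1},\]
and $q$ has the same degree as $p$. Hence, if moreover $\sigma\in\Sigma C_p$, then $\mathcal{J}(C_q)=\sigma(\mathcal{J}(C_p))=\mathcal{J}(C_p)$. So the conjecture is equivalent to the following rigidity assertion: \emph{if the Chebyshev's methods of $p$ and of a rotated copy $q=p\circ\sigma^{-1}$ share the same Julia set, then $\sigma$ permutes the roots of $p$ with multiplicity} --- equivalently, $q$ is a constant multiple of $p$, in which case $\sigma\in\Sigma p$ by the description of $\Sigma p$ in Section~9.5 of~\cite{Beardon_book}. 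Before using this I would normalise via the Scaling theorem so that $p$ is monic and centred, placing $\xi_p$ at the origin; by the discussion preceding the conjecture, the hypothesis that $\Sigma p$ be non-trivial then forces every element of $\Sigma C_p$ to be a rotation about $0$, so $\Sigma p$ and $\Sigma C_p$ are finite cyclic rotation groups with $\Sigma p\subseteq\Sigma C_p$, and it remains only to bound the order of $\Sigma C_p$.

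For the cases actually amenable to proof --- $p$ unicritical, $p$ with exactly two roots of equal multiplicity, $p$ cubic with $\Sigma p$ non-trivial, and $p$ quartic with $\Sigma p$ non-trivial and the centroid among the roots --- I would exploit the explicit normal form of $p$. In each of these families $C_p$ can be written down, its critical points located, and their forward orbits tracked (including the behaviour of the extraneous fixed point and of $\infty$); the goal of this computation is the structural statement advertised in the abstract, namely that $\mathcal{F}(C_p)$ is exactly the union of the attracting basins of the roots of $p$ --- no extraneous attracting basin, rotation domain, or wandering domain --- and that $\mathcal{J}(C_p)$ is connected. Once this is in hand, the Fatou components of $C_p$ are the finitely many immediate basins of the roots together with their $C_p$-preimages, every $\sigma\in\Sigma C_p$ permutes this collection, and the reduction of the previous paragraph becomes effective: $C_{p\circ\sigma^{-1}}$ has the same Julia set, hence the same Fatou components, with its immediate basins centred at the points $\sigma(\text{root of }p)$; comparing this with the configuration of immediate basins of $C_p$, which are centred at the roots of $p$, and using that in these normal forms the multiset of roots is recovered from the configuration of immediate basins (roots being distinguished from extraneous fixed points as the zeros of $p$), one concludes that $\sigma$ permutes the roots of $p$, whence $\sigma\in\Sigma p$ and $\Sigma C_p$ cannot be strictly larger than $\Sigma p$.

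Two preliminary points must be cleared, and the second is where the real difficulty lies. First, one must rule out the degenerate shapes of $\mathcal{J}(C_p)$ that would make $\Sigma C_p$ infinite: that it is not a line is Lemma~\ref{CTI}; that it is not a circle I would obtain by noting that a circular Julia set would conjugate $C_p$ to a power map or a finite Blaschke product with only two invariant Fatou components, which is incompatible with $C_p$ carrying the prescribed third-order root data of a non-monomial $p$ of degree at least two together with (when present) an extraneous fixed point. Second --- and this is the obstacle that keeps the full statement a conjecture --- one must carry out the structural analysis ``$\mathcal{F}(C_p)=$ union of the root basins, $\mathcal{J}(C_p)$ connected'' without any hypothesis on $p$, and for this there is at present no substitute for the case-by-case normal-form computation: the Chebyshev's method, unlike the K\"{o}nig's methods, has an extraneous fixed point, there is no clean ``$p\mapsto C_p$ is injective up to scaling'' statement to fall back on, and controlling the full postcritical set of $C_p$ in general appears to require new input --- for instance a rigidity theorem asserting that two rational maps of the same degree sharing a non-degenerate Julia set differ only by an element of their common symmetry group, together with an argument that the root data persist through the resulting identity.
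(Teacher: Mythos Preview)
The statement is a conjecture, and the paper does not prove it in general; it establishes only the four special cases collected in Theorem~\ref{equal sym}. Your third paragraph correctly recognises this and correctly identifies the obstruction (no control of the postcritical set of $C_p$ in general, no injectivity statement for $p\mapsto C_p$). So the relevant comparison is between your second paragraph and the paper's proofs of the individual cases.

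Your route through $C_q=C_{p\circ\sigma^{-1}}$ and ``comparing configurations of immediate basins'' is conceptually appealing but has a genuine gap at the decisive step. From $\mathcal{J}(C_q)=\mathcal{J}(C_p)$ you get that the immediate basins of $C_q$ are Fatou components of $C_p$, but you never explain how to recognise, from the Fatou set \emph{alone}, which components are immediate basins. The sets $\sigma(\mathcal{A}_r)$ are immediate basins for $C_q$, not a priori for $C_p$; and even once you know $\sigma(\mathcal{A}_r)=\mathcal{A}_{r'}$ for some root $r'$ of $p$, the point $\sigma(r)$ is an attracting fixed point of $C_q$, not of $C_p$, so you cannot conclude $\sigma(r)=r'$ from this alone. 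Your parenthetical ``roots being distinguished from extraneous fixed points as the zeros of $p$'' is circular: it invokes $p$ to locate the roots, which is exactly what is at stake.

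The paper closes this gap with a purely geometric invariant that any rotation must respect: \emph{unboundedness}. Lemma~\ref{bounded} shows that, because $\infty$ is a repelling fixed point of $C_p$, every Fatou component mapping eventually but not immediately into an invariant unbounded basin is bounded; combined with the case-by-case verification that each immediate basin $\mathcal{A}_r$ is unbounded (it contains a ray to $\infty$), this yields that the unbounded Fatou components are \emph{exactly} the immediate basins. Since every $\sigma\in\Sigma C_p$ is a rotation about $0$ (Theorem~\ref{no trans} and Lemma~\ref{C1}), $\sigma$ must permute the unbounded components not containing $0$; but these already form a single $\Sigma p$-orbit of a fixed component $\mathcal{A}$, so $\sigma(\mathcal{A})=\gamma(\mathcal{A})$ for some $\gamma\in\Sigma p$, and Lemma~\ref{symmetry-equality} forces $\sigma=\gamma$. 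This is the mechanism you are missing; once you insert it, your $C_q$-framework collapses to the paper's argument. Incidentally, your detour through ruling out a circular Julia set is unnecessary in the paper's approach: the unbounded-component count already bounds $|\Sigma C_p|$ without any separate finiteness argument.
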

The situation tends to be subtle when $\Sigma p$ consists of the identity only. In this case, the existence of rotations about a nonzero point in $\Sigma C_p$ is not ruled out.
We prove this conjecture for certain polynomials having non-trivial symmetry groups.

In the course of the proofs, the Fatou and the Julia set of $C_p$ are found and we have the following.  
 
  \begin{theorem}\label{equal sym}
  Let $p$ be a centered polynomial with degree at least two satisfying one of the following.
  \begin{enumerate}
  	\item $p$ has exactly two roots with the same multiplicity.
  	\item $p$ is unicritical.
  	\item $p$ is a cubic polynomial and $\Sigma p$ is non-trivial.
  	\item $p$ is a quartic polynomial, $0$ is a root of it and $\Sigma p$ is non-trivial.
  \end{enumerate}
Then $\Sigma p=\Sigma C_p$. 
Furthermore, the Fatou set $\mathcal{F}(C_p)$ is the union of all the attracting basins of $C_p$ corresponding to the roots of $p$ and the Julia set $\mathcal{J}(C_p)$ is connected in each of the above cases.
  \end{theorem}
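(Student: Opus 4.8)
The four cases are handled in parallel: each is reduced to an explicit rational map whose dynamics is then read off. By the Scaling theorem together with Theorem~\ref{symmetry-inclusion} one may replace $p$ by any $\lambda\,p\circ T$ with $\lambda\neq 0$ and $T$ affine; normalising so that $\xi_p=0$ and $p$ is monic reduces the cases, up to affine conjugacy, to the models (1) $p(z)=(z^2-1)^k$, (2) $p(z)=z^n-1$ (which also covers the cubics with $\mathbb{Z}/3$-symmetry), (3) $p(z)=z^3+z$, (4) $p(z)=z^4-z^2$. In each model $\Sigma p$ is a known non-trivial cyclic group of rotations about $0$; by Theorem~\ref{symmetry-inclusion} we already have $\Sigma p\subseteq\Sigma C_p$, and by Theorem~\ref{no trans} with Lemma~\ref{C1} the group $\Sigma C_p$ consists of rotations about a single point, necessarily $\xi_p=0$ since it contains the non-trivial $\Sigma p$. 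Hence the first assertion amounts to ruling out extra rotations in $\Sigma C_p$, for which we first determine the dynamics.

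Computing $C_p$ explicitly in each model, one finds that it commutes with the generators of $\Sigma p$ and, after a rotation, has real coefficients, so a line (in case (2) a $\Sigma p$-symmetric union of lines through the roots) is invariant. Its fixed points are the roots of $p$ (attracting, of local degree $3$ at a simple root), the point $\infty$, whose multiplier equals $\frac{2d^2}{(2d-1)(d-1)}>1$ so that $\infty$ is repelling, and the extraneous fixed points (the zeros of $1+\frac12 L_p$), which a direct computation of multipliers shows to be repelling in each of these models, hence to lie in $\mathcal{J}(C_p)$. Its critical points are found from $C_p'=0$ and from the poles of order $\geq 2$: the simple roots of $p$ and, in models (1) and (2), the pole at $0$ account for part of the critical set, leaving finitely many free critical points which by symmetry fall into a few $\Sigma p$-orbits.

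The dynamical core is to prove that every free critical orbit converges to a root of $p$. In models (1) and (2) the critical point at $0$ maps onto the repelling fixed point $\infty$ and so is harmless; the remaining free critical points are handled by a one-dimensional analysis of $C_p$ on the invariant line(s) — monotonicity and convexity between consecutive roots force orbits there to converge to a root — together with the symmetry, which transports the conclusion from one free critical point to its entire $\Sigma p$-orbit when these points are non-real. Granting this, no critical point or critical orbit is available to support a non-root attracting cycle, a parabolic cycle, a Siegel disk or a Herman ring, so by the classification of periodic Fatou components and Sullivan's no-wandering-domains theorem $\mathcal{F}(C_p)$ is precisely the union of the basins of the roots. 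A Riemann--Hurwitz analysis on the proper self-maps $C_p\colon B_0(a_i)\to B_0(a_i)$ of the immediate basins $B_0(a_i)$, keeping track of the critical points in each basin and noting that a Fatou component containing no critical point maps as an unbranched covering onto an eventually simply connected component, shows that every Fatou component is simply connected; hence $\mathcal{J}(C_p)$ is connected.

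Finally, $\mathcal{J}(C_p)$ is connected by the above, is not a line by Lemma~\ref{CTI}, is not the whole sphere since $\mathcal{F}(C_p)\neq\emptyset$, and is not a circle (in the models with at least three distinct roots $\mathcal{F}(C_p)$ has at least three components, and in the remaining models a direct inspection of the critical set excludes it); so $\mathcal{J}(C_p)$ is non-exceptional, and by the rigidity of such Julia sets (\cite{Beardon_book,Boyd2000}) every $\sigma\in\Sigma C_p$ commutes with some iterate $C_p^{N}$. Then $\sigma$ conjugates $C_p^{N}$ to itself, hence permutes the attracting fixed points of $C_p^{N}$, which by the previous paragraph are exactly the roots of $p$; thus $\sigma$ is a rotation about $\xi_p$ permuting the roots of $p$, giving $p\circ\sigma=\omega^{\deg p}\,p$ with $|\omega|=1$, and since $\omega$ is then a root of unity ($p$ is not a monomial) this propagates to $p^{m}\circ\sigma=\rho_m\circ p^{m}$ with each $\rho_m$ a rotation, so $\sigma$ preserves $\mathcal{J}(p)$ and $\sigma\in\Sigma p$. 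Together with $\Sigma p\subseteq\Sigma C_p$ this yields $\Sigma p=\Sigma C_p$. I expect the main obstacle to be the convergence of the free critical orbits in the third step: the quartic model (4), with the largest degree and the least symmetry, is the most delicate — its free critical points are generically non-real and must be controlled through the $\mathbb{Z}/2$-symmetry and explicit estimates rather than by interval dynamics alone — and model (1) with $k\geq 2$ presents a milder instance of the same difficulty.
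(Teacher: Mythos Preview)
Your overall architecture---reduce to explicit models, track the free critical orbits, classify Fatou components, then deduce the symmetry equality---matches the paper's, but two steps contain genuine gaps and one case is incomplete.

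\textbf{Missing model in case (4).} A normalized quartic with $p(0)=0$ and non-trivial $\Sigma p$ is $z^2(z^2+a)$ \emph{or} $z(z^3+b)$; you only list $z^4-z^2$. The second family (the paper treats $z(z^3-1)$) has $\Sigma p\cong\mathbb{Z}/3$, three simple poles, and a real free critical point $-\!\left(\tfrac{1}{14}\right)^{1/3}$ whose orbit requires a separate one-dimensional analysis; it is not covered by anything you wrote.

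\textbf{The symmetry step.} Your final paragraph relies on the claim that any $\sigma\in\Sigma C_p$ commutes with some iterate $C_p^{N}$, quoting ``rigidity of Julia sets''. The results you cite concern pairs of rational maps of degree $\ge 2$ with equal Julia sets; they do not give $\sigma\circ C_p^{N}\circ\sigma^{-1}=C_p^{N}$ for a degree-one $\sigma$. Without this commutation you cannot conclude that $\sigma$ permutes the attracting fixed points (it only permutes Fatou components), so the chain ``$\sigma$ permutes roots $\Rightarrow p\circ\sigma=\omega^{d}p\Rightarrow\sigma\in\Sigma p$'' never gets started. The paper avoids this entirely: it first shows (via Lemma~\ref{bounded}) that the \emph{only} unbounded Fatou components are the immediate basins of the roots, and that those not containing $0$ form a single $\Sigma p$-orbit. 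Then any rotation $\sigma\in\Sigma C_p$ about $0$ sends one such basin onto another, hence agrees with some $\gamma\in\Sigma p$ on an open set, so $\sigma=\gamma$. This is Lemma~\ref{symmetry-equality}, and it needs no rigidity input.

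\textbf{Connectedness of $\mathcal{J}(C_p)$.} Your Riemann--Hurwitz sketch presumes you know the degree of $C_p$ on each immediate basin; you do not compute this, and it is not obvious (the basins are not completely invariant). The paper instead shows that each immediate basin is unbounded, invokes Lemma~\ref{Nayak_Pal} to place a pole on each $\partial\mathcal{A}_{a_i}$, and then argues that the unbounded Julia component contains every pole, after which Lemma~\ref{connected J'set} gives connectedness directly. This route needs no Euler-characteristic bookkeeping.

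In short: keep your reduction and the critical-orbit analysis (adding the missing quartic model), but replace the rigidity argument by the ``unbounded Fatou components'' argument of Lemma~\ref{symmetry-equality}, and replace the Riemann--Hurwitz step by the pole-on-boundary argument of Lemmas~\ref{Nayak_Pal}--\ref{connected J'set}.
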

The polynomial $p(z)=(z-1)^3 (z+1)^3$ is with exactly two roots with the same multiplicity. The Julia set  of its Chebyshev's method is shown as the common boundary of the yellow and the blue regions in Figure~\ref{unicrit_exacttwo}a. For the unicritical polynomial $p(z)=z^3-1$, the Julia set of $C_p$ is given as the common boundary of the yellow, blue and green regions in Figure~\ref{unicrit_exacttwo}b. 
Similarly, 	Figure~\ref{J_set_d3r2} shows the Julia set of $C_p$ where $p(z)=z(z^2-1)$ which is a cubic polynomial with non-trivial symmetry group. The case of quartic polynomials are given in Figures~\ref{J_set_d4r2} and \ref{J_set_d4r3}. It is important to observe that a region with a single colour is the basin (not the immediate basin of an attracting fixed point) and is not connected in any of these cases.
\par 
The article is organized in the following way.
Section $2$ makes some initial exploration of the symmetry group of root-finding methods satisfying the Scaling theorem and proves Theorem~\ref{symmetry-inclusion} along with some other useful results. The Julia sets and the symmetry groups of the Chebyshev's method applied to some polynomials, as enumerated in Theorem~\ref{equal sym} are determined in Section $3$. The article concludes with Section $4$ where few remarks and problems arising out of this work are stated. 
\par
All rational maps (including the polynomials)   considered in this article are of degree at least two. Also every polynomial considered is different from monomials unless stated otherwise. The boundary of an open connected subset $A$ of $\widehat{\mathbb{C}}$ is denoted by $\partial A$.
\section{ Scaling and symmetry}
For a rational map $R$, recall that $\Sigma R$  is the set of all holomorphic Euclidean isometries, in short Euclidean isometries that preserve $\mathcal{J}(R)$. This section discusses  possible structure of $\Sigma R$ when it contains at least one  non-identity element. The assumptions on $R$ are mostly those satisfied by the root-finding methods dealt with in this article. By saying a rotation or a translation, we mean non-trivial rotation or translation respectively where every map different from the identity  is called non-trivial. 
\par
The point $\infty$  is a superattracting fixed point of every polynomial (with degree at least two). Its Fatou set  contains  the immediate basin corresponding to $\infty$ which is an open set containing $\infty$. Therefore, the Julia set is bounded and the symmetry group of every polynomial does not contain any translation. But this is not true for rational maps in general. The point $\infty$ may not even be a fixed point. For example, the Julia set of the Newton method of $z^2-1$ is the imaginary axis which is invariant under every translation by a purely imaginary number.  Boyd proved that if $\mathcal{J}(R)$   is invariant under $z \mapsto z+1$, and the point at $\infty$ is either periodic or pre-periodic, then $\mathcal{J}(R)$ is either $\widehat{\mathbb{C}}$ or a horizontal line (Theorem 1, \cite{Boyd2000}). A minor improvement of this, is possible. For $z\in \mathbb{C}$ and $A\subset \mathbb{C}$, we define $z+A$ as $\{z+a:a\in A\}$.
\begin{lemma}\label{Boyd}
	Let $R$ be a rational map of degree at least two such that $\mathcal{J}(R)+a=\mathcal{J}(R)$ for some $a \neq 0$. If $\infty$ is either periodic or pre-periodic then $\mathcal{J}(R)$ is the extended complex plane or a line.
\end{lemma}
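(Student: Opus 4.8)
The plan is to reduce the statement to Boyd's theorem (Theorem~1,~\cite{Boyd2000}) by a linear change of coordinates that turns the translation $z \mapsto z + a$ into the unit translation $z \mapsto z+1$. Since Boyd's result is stated only for invariance under $z \mapsto z+1$, all that is needed is to check that such a conjugacy interacts well with the Julia set and with the orbit of $\infty$; the improvement from ``horizontal line'' to ``line'' is then automatic, because the direction of the line rotates together with $a$.

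Concretely, I would set $\psi(z) = z/a$ and $\tilde R = \psi \circ R \circ \psi^{-1}$, so that $\tilde R$ is again a rational map of degree at least two. Standard invariance properties under M\"{o}bius conjugacy give $\mathcal{J}(\tilde R) = \psi(\mathcal{J}(R))$. A direct computation shows that $\psi \circ (z \mapsto z+a) \circ \psi^{-1}$ is exactly $w \mapsto w+1$; hence the hypothesis $\mathcal{J}(R) + a = \mathcal{J}(R)$ becomes $\mathcal{J}(\tilde R) + 1 = \mathcal{J}(\tilde R)$. Moreover $\psi$ is affine, so $\psi(\infty) = \infty$, and conjugation by $\psi$ carries the $R$-orbit of $\infty$ to the $\tilde R$-orbit of $\infty$; in particular $\infty$ is periodic or pre-periodic for $\tilde R$ precisely when it is for $R$. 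Boyd's theorem then applies to $\tilde R$ and yields that $\mathcal{J}(\tilde R)$ is either $\widehat{\mathbb{C}}$ or a horizontal line.

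Finally, I would transport the conclusion back via $\mathcal{J}(R) = \psi^{-1}(\mathcal{J}(\tilde R)) = a\,\mathcal{J}(\tilde R)$. If $\mathcal{J}(\tilde R) = \widehat{\mathbb{C}}$ then $\mathcal{J}(R) = \widehat{\mathbb{C}}$, and if $\mathcal{J}(\tilde R)$ is a horizontal line then $\mathcal{J}(R)$ is its image under multiplication by $a$, which is again a straight line. This finishes the argument. I do not expect a genuine obstacle here: the only points that need care are the bookkeeping facts that M\"{o}bius conjugacy transports Julia sets and that an affine conjugacy fixes $\infty$ and hence preserves the property of $\infty$ being (pre-)periodic, both of which are standard.
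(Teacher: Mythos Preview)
Your proposal is correct and follows essentially the same approach as the paper's own proof: conjugate by $\psi(z)=z/a$ (the paper calls it $\phi$) to turn the translation by $a$ into the unit translation, apply Boyd's theorem to the conjugated map, and pull the conclusion back via $\mathcal{J}(R)=a\,\mathcal{J}(\tilde R)$. Your write-up is slightly more explicit about why $\infty$ remains (pre-)periodic under the affine conjugacy, but otherwise the two arguments are identical.
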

\begin{proof}
	Consider a rational map $f=\phi \circ R \circ \phi^{-1}$, where $\phi(z)=\frac{z}{a}$. Then $\infty$ is either periodic or pre-periodic for $f$. Note that
	$\mathcal{J}(f)  =\phi (\mathcal{J}(R))$. By the hypothesis, $  \phi(\mathcal{J}(R)) = \phi(\mathcal{J}(R)+a)$, which is nothing but $\mathcal{J}(f)+1.
$
	Hence by Boyd's result (Theorem1, \cite{Boyd2000}), $\mathcal{J}(f)$ is either the whole extended complex plane or a horizontal line. If $\mathcal{J}(f)$ is $\widehat{\mathbb{C}}$, then $\mathcal{J}(R)=\widehat{\mathbb{C}}$. If $\mathcal{J}(f)$ is a horizontal line  then $\mathcal{J}(R)$ is a line with slope $\tan(Arg(a))$.
\end{proof} 
The above lemma gives under some situation that the  Julia set is not complicated whenever there is a translation in the symmetry group of the map. Now we consider the other case, i.e., when the symmetry group does not contain any translation. 
\par
  A rotation about any point in $\mathbb{C}$ is an Euclidean isometry. If $ \sigma_1, \sigma_2 \in \Sigma R$ are non-trivial rotations  about two different points $\alpha$ and $\beta$ in $\mathbb{C}$ respectively then $\sigma_1 \circ \sigma_2 \circ \sigma_1 ^{-1} \circ \sigma_2 ^{-1}$  is a non-trivial translation. In fact, if this $\sigma_i$ is a rotation by angle $\theta_i \in (0,2\pi), i=1,2$ then $\sigma_1 \circ \sigma_2 \circ \sigma_1 ^{-1} \circ \sigma_2 ^{-1}(z)=z+ (\beta-\alpha)(e^{i\theta_1}+e^{i\theta_2}-e^{i(\theta_1 +\theta_2)}-1)=z+ (\beta-\alpha)(e^{i\theta_1}-1)(1-e^{i\theta_2})$ and the constant term is always nonzero. This observation gives the following.
\begin{lemma}\label{C1}
	If the Julia set of a rational map $R$ is not invariant under any non-trivial translation and $\Sigma R$ contains at least one non-identity element then there exists $z_R \in \mathbb{C}$ such that every element of $\Sigma  R $ is a rotation about $z_R$.
\end{lemma}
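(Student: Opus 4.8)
The plan is to combine the classification of holomorphic Euclidean isometries of the plane with the commutator computation recorded in the discussion immediately preceding the lemma. First I would recall that any map $\sigma(z) = az+b$ with $|a| = 1$ is either a translation, when $a = 1$, or, when $a \neq 1$, a rotation about its unique fixed point $z_0 = \frac{b}{1-a}$; there is no third possibility. Since by hypothesis $\mathcal{J}(R)$ is not invariant under any non-trivial translation, the group $\Sigma R$ contains no translation other than the identity, and therefore every non-identity element of $\Sigma R$ must be a rotation about some point of $\mathbb{C}$.

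Next, using that $\Sigma R$ has a non-identity element, I would fix one such element $\sigma_1 \in \Sigma R$, which by the previous step is a rotation by some angle $\theta_1 \in (0,2\pi)$ about a point $z_R \in \mathbb{C}$. The claim is then that every other non-identity $\sigma_2 \in \Sigma R$ is also a rotation about this same $z_R$. Indeed, $\sigma_2$ is a rotation by some angle $\theta_2 \in (0,2\pi)$ about a point $w \in \mathbb{C}$; if $w \neq z_R$, then by the commutator identity preceding the lemma the map $\sigma_1 \circ \sigma_2 \circ \sigma_1^{-1} \circ \sigma_2^{-1}$ equals the translation $z \mapsto z + (w - z_R)(e^{i\theta_1} - 1)(1 - e^{i\theta_2})$, whose constant term is nonzero because $e^{i\theta_1} \neq 1$, $e^{i\theta_2} \neq 1$ and $w \neq z_R$. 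As $\Sigma R$ is a group under composition, this commutator belongs to $\Sigma R$, contradicting the absence of non-trivial translations. Hence $w = z_R$.

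Finally, since the identity is trivially a rotation about $z_R$, I would conclude that every element of $\Sigma R$ is a rotation about the single point $z_R$, establishing the lemma. I do not expect any genuine obstacle: the one computation that drives the argument, namely that the commutator of two rotations about distinct centres is a non-trivial translation, is already in hand, so the remaining work is only the bookkeeping of invoking the group structure of $\Sigma R$ together with the elementary classification of holomorphic isometries.
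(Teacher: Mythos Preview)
Your proposal is correct and is precisely the argument the paper intends: the paper states the lemma as an immediate consequence of the commutator computation in the paragraph preceding it, and your write-up simply fills in the routine bookkeeping (classifying each isometry as a translation or rotation, fixing one rotation centre $z_R$, and deriving a contradiction from a second centre via the commutator). There is no difference in approach.
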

Let $\sigma\in \Sigma R$ fix  an attracting fixed point $z_0$  of $R$. If $z_0 \in \mathbb{C}$ then $\sigma$ is not a translation. Otherwise, i.e., if $z_0 =\infty$ then $\mathcal{J}(R)$ is bounded and $\sigma$ cannot be a translation. This situation is dealt with in the next lemma.
\begin{lemma}\label{imm_basin_preserved}
	Let $R$ be rational map of degree at least two and $\sigma\in \Sigma R$. If $\sigma$ fixes a (super)attracting fixed point $z_0$ of $R$ then $\sigma(\mathcal{A}_{z_0})=\mathcal{A}_{z_0}$, where $\mathcal{A}_{z_0}$ is the immediate basin of $z_0$.
\end{lemma}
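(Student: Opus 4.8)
The plan is to use two facts: every element of $\Sigma R$ permutes the Fatou components of $R$, and the immediate basin $\mathcal{A}_{z_0}$ is precisely the Fatou component of $R$ containing $z_0$. For the first, note that $\sigma\in\Sigma R$ is an affine map of $\mathbb{C}$, hence extends to a homeomorphism of $\widehat{\mathbb{C}}$ fixing $\infty$; since $\sigma(\mathcal{J}(R))=\mathcal{J}(R)$ we also have $\sigma(\mathcal{F}(R))=\mathcal{F}(R)$, and a homeomorphism of $\widehat{\mathbb{C}}$ preserving the open set $\mathcal{F}(R)$ carries each connected component of $\mathcal{F}(R)$ onto a connected component of $\mathcal{F}(R)$. (We may assume $\mathcal{F}(R)\neq\emptyset$, since otherwise $R$ has no attracting fixed point and there is nothing to prove.)

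For the second fact I would recall the standard observation that the full basin $\mathcal{B}_{z_0}=\{z\in\widehat{\mathbb{C}}:R^n(z)\to z_0\}$ is an open subset of $\mathcal{F}(R)$ which is a union of Fatou components: if a Fatou component $U$ meets $\mathcal{B}_{z_0}$, then normality of $\{R^n\}$ on $U$ together with connectedness of $U$ forces $R^n\to z_0$ locally uniformly on all of $U$, so $U\subseteq\mathcal{B}_{z_0}$. Consequently the connected component of $\mathcal{B}_{z_0}$ containing $z_0$, which is $\mathcal{A}_{z_0}$ by definition, coincides with the unique Fatou component $U_0$ of $R$ that contains $z_0$.

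Putting these together: since $\sigma(z_0)=z_0$ and $\sigma$ maps Fatou components to Fatou components, $\sigma(\mathcal{A}_{z_0})=\sigma(U_0)$ is a Fatou component of $R$ containing $\sigma(z_0)=z_0$, hence equals $U_0=\mathcal{A}_{z_0}$ by uniqueness of the Fatou component through a given point. I do not anticipate any real obstacle here; the only point worth a remark is that the argument is uniform in the two cases $z_0\in\mathbb{C}$ and $z_0=\infty$, the latter being included because any $\sigma(z)=az+b$ with $|a|=1$ fixes $\infty$, so that $\sigma(\mathcal{A}_\infty)$ is again a Fatou component containing $\infty$.
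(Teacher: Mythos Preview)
Your proof is correct and follows essentially the same approach as the paper: both argue that $\sigma$ permutes Fatou components and that $\sigma(\mathcal{A}_{z_0})$ contains $z_0$, hence coincides with $\mathcal{A}_{z_0}$. You have simply spelled out in more detail why $\sigma$ permutes Fatou components and why $\mathcal{A}_{z_0}$ is itself a Fatou component, whereas the paper takes these as understood.
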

\begin{proof}
	As $\sigma\in \Sigma R$, $\sigma$ preserves the Julia set of $R$. Therefore a Fatou component of $R$ is mapped onto a Fatou component by $\sigma$. As $\sigma(z_0)=z_0$, $\mathcal{A}_{z_0}$ and $\sigma(\mathcal{A}_{z_0})$ intersect. This proves that $\sigma(\mathcal{A}_{z_0})=\mathcal{A}_{z_0}$. 
\end{proof}
Recall that if a root-finding method $F$ satisfies the Scaling theorem then $F_p$ is affine conjugate to $F_{\lambda p \circ T}$ via the affine map $T$. In order to prove Theorem~\ref{symmetry-inclusion}, we need to relate their symmetry groups. Here is a result for this purpose.
\begin{lemma}\label{Cg}
	If for two rational maps $R$ and $S$, there is an affine map $\gamma$ such that $S=\gamma \circ R \circ  \gamma^{-1}$ then $\Sigma  S = \gamma (\Sigma  R) \gamma^{-1}$.
\end{lemma}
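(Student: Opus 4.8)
The plan is to lean on two elementary facts: that conjugation by a M\"obius map transports Julia sets, and that conjugation by an \emph{affine} map carries Euclidean isometries to Euclidean isometries. First I would record that since $\gamma$ is affine, hence M\"obius, the standard conjugacy-invariance of the Fatou/Julia dichotomy gives $\mathcal{J}(S)=\gamma(\mathcal{J}(R))$.

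Next I would verify the affine-conjugation claim by a direct computation. Writing $\gamma(z)=az+b$ with $a\neq 0$ and $\sigma(z)=cz+d$ with $|c|=1$, one gets $\gamma\circ\sigma\circ\gamma^{-1}(z)=cz+\bigl(ad+(1-c)b\bigr)$, again a map $z\mapsto cz+(\cdot)$ with $|c|=1$, i.e.\ a Euclidean isometry. Applying the same observation to $\gamma^{-1}$ (also affine) shows that $\sigma\mapsto\gamma\circ\sigma\circ\gamma^{-1}$ is a bijection of the group of Euclidean isometries onto itself, with inverse $\tau\mapsto\gamma^{-1}\circ\tau\circ\gamma$.

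Then the two inclusions fall out. For $\gamma(\Sigma R)\gamma^{-1}\subseteq\Sigma S$: if $\sigma\in\Sigma R$ then $\gamma\circ\sigma\circ\gamma^{-1}$ is a Euclidean isometry by the previous step, and $(\gamma\circ\sigma\circ\gamma^{-1})(\mathcal{J}(S))=\gamma(\sigma(\mathcal{J}(R)))=\gamma(\mathcal{J}(R))=\mathcal{J}(S)$, using $\mathcal{J}(S)=\gamma(\mathcal{J}(R))$ and $\sigma(\mathcal{J}(R))=\mathcal{J}(R)$; hence $\gamma\circ\sigma\circ\gamma^{-1}\in\Sigma S$. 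For the reverse inclusion, given $\tau\in\Sigma S$ set $\sigma=\gamma^{-1}\circ\tau\circ\gamma$, which is a Euclidean isometry, and run the computation backwards: $\sigma(\mathcal{J}(R))=\gamma^{-1}(\tau(\gamma(\mathcal{J}(R))))=\gamma^{-1}(\tau(\mathcal{J}(S)))=\gamma^{-1}(\mathcal{J}(S))=\mathcal{J}(R)$, so $\sigma\in\Sigma R$ and $\tau=\gamma\circ\sigma\circ\gamma^{-1}\in\gamma(\Sigma R)\gamma^{-1}$.

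I do not expect a real obstacle here; the only point deserving care is that $\gamma$ must be affine rather than merely M\"obius — a general M\"obius conjugacy still transports Julia sets but could turn a rotation into a non-isometric M\"obius map, so the statement would fail. It is worth flagging that this is exactly why the Scaling theorem, which supplies an \emph{affine} conjugacy, is the correct hypothesis for Theorem~\ref{symmetry-inclusion}.
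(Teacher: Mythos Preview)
Your proof is correct and follows essentially the same route as the paper's: invoke the conjugacy-invariance of Julia sets to get $\mathcal{J}(S)=\gamma(\mathcal{J}(R))$, compute that an affine conjugate of a Euclidean isometry is again a Euclidean isometry, and verify one inclusion directly (the paper dispatches the reverse inclusion with ``similarly'', which you write out). Your closing remark on why $\gamma$ must be affine rather than merely M\"obius is a nice addition not present in the paper.
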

\begin{proof}
	By Theorem 3.1.4, \cite{Beardon_book}, $\mathcal{J}(S)=\gamma (\mathcal{J}(R)).$ If $\sigma \in \Sigma  R $, where $\sigma (z)=az+b$, and $\gamma(z)=\alpha z+\beta$, $a,b,\alpha,\beta\in \mathbb{C}, |a|=1, \alpha\neq 0$ then $\gamma \sigma \gamma^{-1}(z)=az-a\beta+\alpha b+\beta$ is an Euclidean isometry. Further,
	\begin{align*}
	\gamma \sigma \gamma^{-1}(\mathcal{J}(S))& =\gamma \sigma (\mathcal{J}(R))\\
	& = \gamma (\mathcal{J}(R))=\mathcal{J}(S)
	\end{align*}
	This implies that $\gamma \circ \sigma \circ  \gamma^{-1}\in \Sigma S.$ Therefore, $\gamma (\Sigma R) \gamma^{-1} \subseteq \Sigma S.$ Similarly, it can be shown that $\Sigma  S  \subseteq \gamma (\Sigma R) \gamma^{-1}.$
\end{proof}
Recall that the centroid $\xi_p $ of $p(z)=a_dz^d+a_{d-1}z^{d-1}+\dots +a_0$  is given by $ -\frac{a_{d-1}}{da_d}$.
Note that $p \circ T$ is centered and its leading coefficient is $a_d$ where $T(z)=z+\xi_p$. Then $g=\frac{1}{a_d}p\circ T$ is a normalized polynomial. Now, if any root-finding method $F_p$ satisfies the Scaling theorem then  $F_p=T \circ F_g \circ T^{-1} $. It follows from Lemma~\ref{Cg} that  $\Sigma  F_p=T (\Sigma  F_g)  T^{-1} $. There is a useful observation.
\begin{Observation}\label{normalization_1}
Let $p$ be a centered polynomial and $F$ be a root-finding method satisfying the Scaling theorem.
\begin{enumerate}
	\item If the leading coefficient of $p$ is $a_d$ then $g=\frac{1}{a_d}p$ is a normalized polynomial and $\Sigma p=\Sigma g$. As $F$ satisfies the Scaling theorem, $F_p=F_g$ and thus  $\Sigma  F_p=\Sigma  F_g$. Thus,  it is enough to consider normalized polynomials in order to analyze the symmetry groups and dynamics of root-finding methods satisfying the Scaling theorem.
	\item Consider $h=p\circ T$ where $T(z)=Az$, $A\in \mathbb{C}\setminus \{0\}$. Then $h$ is also a centered polynomial and $\Sigma p=\Sigma h$. Also, from the Scaling theorem, we get $F_h=T^{-1}\circ F_p\circ T$, and thus $\Sigma F_h=T^{-1}(\Sigma F_p) T=\Sigma F_p$.
\end{enumerate}
\end{Observation}
Now we present the proof of Theorem \ref{symmetry-inclusion}.
\begin{proof}[Proof of Theorem \ref{symmetry-inclusion}]
	By Observation~\ref{normalization_1}(1) let $p$ be a normalized polynomial and  $\sigma \in \Sigma p$. Then by Lemma 9.5.6,~\cite{Beardon_book},  $p \circ \sigma=\sigma^d \circ p$ where   $\deg(p)=d$. Since $\sigma(z)=\lambda z$ and the root-finding method satisfies the Scaling theorem, $F_{p \circ \sigma} =F_p $. Further, it follows that  $\sigma \circ F_{p \circ \sigma} \circ \sigma^{-1} =F_p $ and hence $\sigma \circ F_{p } \circ \sigma^{-1} =F_p $ . By Theorem 3.1.4,~\cite{Beardon_book}, $\sigma (\mathcal{J}(F_p))=\mathcal{J}(F_p).$ Hence, $\sigma \in \Sigma F_p.$ 
\end{proof}
Theorem \ref{symmetry-inclusion} is not necessarily true for a polynomial which is not centered.
\begin{example}\label{example}
	Consider the polynomial $p(z)=z^2+2z$. Then its centroid is $-1$ and it is conjugate to $z^2$. Thus the Julia set of $p$ is a circle centered at $-1$ and hence $\Sigma p=\{z\mapsto \lambda(z+1)-1: |\lambda|=1\}$ is an infinite set.
	\par 
	Now consider $g(z)=p(z-1)=z^2-1$. By the Scaling theorem, $C_g=T^{-1}\circ C_p\circ T$, where $T(z)=z-1$. Again by Lemma \ref{Cg}, $\Sigma C_g=T^{-1}(\Sigma C_p)T$. Note that $\Sigma C_g=\Sigma g=\{I,z\mapsto -z\}$ (see the proof of Theorem \ref{equal sym}). Thus $\Sigma C_p=\{I, z\mapsto -z-2\}$ which does not contain $\Sigma p$.
\end{example}
For two polynomials $p$ and $g$, we say that $\Sigma p$ and $\Sigma g$ are isomorphic if either both the sets are infinite or $o(\Sigma p)=o(\Sigma g)$. Observe that in the above example $\Sigma p$ is an infinite set, whereas $\Sigma g$ is finite, i.e., $\Sigma p$ and $\Sigma g$ are not isomorphic.
\begin{Remark}\label{equal_condition}
	Any polynomial $p$ with centroid $\xi$($\neq 0$) can be transformed into a centered polynomial $g$ by considering $g=p\circ T$ where $T(z)=z+\xi$. Note that $\Sigma p=T(\Sigma g)T^{-1}$ whenever $\Sigma p$ and $\Sigma g$ are isomorphic.
	\par 
	If a root-finding method $F$ satisfies the Scaling theorem then by Theorem \ref{symmetry-inclusion}, $\Sigma g\subseteq \Sigma F_g$ (we exclude the possibility that $g$ is a monomial). The Scaling theorem gives that $F_g=T^{-1}\circ F_p\circ T$ and by Lemma \ref{Cg}, we get $\Sigma F_p=T(\Sigma F_g)T^{-1}$. Thus, if $\Sigma g$ is non-trivial then $\Sigma F_p$ contains rotations about the centroid of $p$, i.e., $\xi$. If $\Sigma p$ and $\Sigma g$ are isomorphic (i.e., $o(\Sigma p)=o(\Sigma g)$ whenever $\Sigma g$ is finite) then the relation $\Sigma g\subseteq \Sigma F_g$ gives that $\Sigma p\subseteq \Sigma F_p$. Whether this inclusion holds good if $\Sigma p$ and $\Sigma g$ are not isomorphic remains to be explored.
\end{Remark} 
We conclude with an useful lemma on the symmetry group of root-finding methods.
\begin{lemma}
	Let $p$ be a normalized polynomial with non-trivial symmetry group and $F$ is a root-finding method satisfying the Scaling theorem such that $F_p$ have an unbounded Fatou component $\mathcal{A}$ not containing $0$. If $\Sigma F_p$ does not contain any non-trivial translation and  every unbounded Fatou component of it not containing  $0$ is the $\gamma$-image of $\mathcal{A}$ for some $\gamma \in \Sigma p$  then $\Sigma F_p = \Sigma p$. 
	\label{symmetry-equality}
\end{lemma}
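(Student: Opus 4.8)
The plan is to prove the two inclusions separately. The inclusion $\Sigma p\subseteq\Sigma F_p$ comes for free from Theorem~\ref{symmetry-inclusion} (the standing convention rules out $p$ being a monomial), so the whole content is the reverse inclusion $\Sigma F_p\subseteq\Sigma p$. As a first step I would pin down the shape of the elements of $\Sigma F_p$. Since $p$ is normalized its centroid is $\xi_p=0$, so every element of $\Sigma p$ is a rotation about $0$; as $\Sigma p$ is non-trivial, $\Sigma F_p$ contains a non-identity element. By hypothesis $\Sigma F_p$ has no non-trivial translation, so Lemma~\ref{C1} yields a point $z_{F_p}$ such that every element of $\Sigma F_p$ is a rotation about $z_{F_p}$. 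Comparing with a non-trivial rotation of $\Sigma p$, whose unique fixed point is $0$, forces $z_{F_p}=0$. Hence every $\sigma\in\Sigma F_p$ has the form $\sigma(z)=az$ with $|a|=1$; in particular $\sigma$ fixes both $0$ and $\infty$.

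Next I would use the distinguished component $\mathcal{A}$ to ``correct'' an arbitrary $\sigma\in\Sigma F_p$. Since $\sigma$ preserves $\mathcal{J}(F_p)$ it permutes the Fatou components of $F_p$; being a Euclidean isometry it sends unbounded components to unbounded ones; and since $\sigma(0)=0$ it sends components avoiding $0$ to components avoiding $0$. Thus $\sigma(\mathcal{A})$ is again an unbounded Fatou component of $F_p$ not containing $0$, so by hypothesis $\sigma(\mathcal{A})=\gamma(\mathcal{A})$ for some $\gamma\in\Sigma p$. Put $\tau=\gamma^{-1}\circ\sigma$. Then $\tau\in\Sigma F_p$, $\tau$ is a rotation about $0$, and $\tau(\mathcal{A})=\mathcal{A}$ as a subset of $\widehat{\mathbb{C}}$. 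If I can show $\tau$ is the identity, then $\sigma=\gamma\in\Sigma p$, and since $\sigma$ was arbitrary the proof is finished.

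So the crux is: \emph{a rotation about $0$ fixing $\mathcal{A}$ setwise must be the identity}. Here I would argue by winding numbers. First, $\mathcal{A}$ contains neither $0$ (hypothesis) nor $\infty$ (for $F=C$ this is because $\infty$ is a repelling fixed point of $C_p$, hence in $\mathcal{J}(C_p)$; in the applications $\mathcal{A}$ is a component of the basin of a root of $p$), so $\mathcal{A}\subseteq\mathbb{C}\setminus\{0\}$ and $0,\infty$ lie in a single connected component of $\widehat{\mathbb{C}}\setminus\mathcal{A}$; consequently every loop contained in $\mathcal{A}$ has winding number $0$ about $0$. Now suppose $\tau\ne\mathrm{id}$. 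If $\tau$ has finite order $n\ge 2$, pick $z_0\in\mathcal{A}$ and a path $\alpha$ in $\mathcal{A}$ from $z_0$ to $\tau(z_0)$; concatenating $\alpha,\tau\alpha,\dots,\tau^{n-1}\alpha$ produces a closed loop in $\mathcal{A}$, and since a rotation leaves the increment of $\arg$ along a path unchanged, the winding number of this loop about $0$ is $n$ times the $\frac{1}{2\pi}\arg$-increment along $\alpha$, which equals $j+nk$ for some integer $k$, where $\tau$ is rotation by $2\pi j/n$ with $\gcd(j,n)=1$; since $n\ge 2$ this is nonzero, a contradiction. If $\tau$ has infinite order, then $\tau^k(\mathcal{A})=\mathcal{A}$ for all $k$ together with density of $\{\tau^k\}$ in the rotation group forces $\mathcal{A}$ to contain a full circle about $0$, hence a loop about $0$, again contradicting the previous paragraph. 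Therefore $\tau=\mathrm{id}$ and $\Sigma F_p=\Sigma p$.

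The hard part is this last step, and its soft spot is the tacit use that $\mathcal{A}$ does not separate $0$ from $\infty$ in $\widehat{\mathbb{C}}$, which I would obtain from simple connectivity of $\mathcal{A}$ (no Herman‑ring–type component wrapping around the origin). This is automatic once the Julia set of $F_p$ is known to be connected, so in the concrete situations of Theorem~\ref{equal sym} I would verify it in tandem with the determination of $\mathcal{F}(C_p)$ and the connectedness of $\mathcal{J}(C_p)$; alternatively one argues directly that the root‑finding methods in question have no multiply connected Fatou components. Everything else — the passage to the stabilizer of $\mathcal{A}$ and the identification of the common centre of rotation as $\xi_p=0$ — is routine given Theorem~\ref{symmetry-inclusion} and Lemma~\ref{C1}.
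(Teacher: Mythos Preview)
Your reduction is exactly the paper's: show $\Sigma p\subseteq\Sigma F_p$ via Theorem~\ref{symmetry-inclusion}, use Lemma~\ref{C1} together with a non-trivial element of $\Sigma p$ to force every $\sigma\in\Sigma F_p$ to be a rotation about $0$, and then match $\sigma(\mathcal{A})$ with some $\gamma(\mathcal{A})$, $\gamma\in\Sigma p$. At the last step the paper simply writes that ``$\gamma$ and $\sigma$ are two analytic maps agreeing on a domain $\mathcal{A}$, $\gamma=\sigma$''; but $\sigma(\mathcal{A})=\gamma(\mathcal{A})$ is only a \emph{set} equality, so the identity theorem does not apply as stated. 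You are right to isolate ``$\tau(\mathcal{A})=\mathcal{A}\Rightarrow\tau=\mathrm{id}$'' as the real content, and your winding-number argument is a legitimate way to supply it.

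Two points about that argument. First, it genuinely needs the extra hypothesis that $\mathcal{A}$ does not separate $0$ from $\infty$; this is not among the lemma's hypotheses, so strictly speaking neither your proof nor the paper's proves the lemma as stated. Second, be careful with the patch you propose: in every case of Theorem~\ref{equal sym} the paper invokes Lemma~\ref{symmetry-equality} \emph{before} establishing that $\mathcal{J}(C_p)$ is connected, so deducing simple connectivity of $\mathcal{A}$ from the connectedness of $\mathcal{J}(C_p)$ would be circular. A non-circular substitute is available from facts already in hand at that stage: in cases (1), (3) and (4)(1) the imaginary axis lies in $\mathcal{J}(C_p)$ or in $\mathcal{A}_0$, and in case (4)(2) the three rays $\omega^j(-\infty,0]$ lie in $\mathcal{A}_0$, so $\mathcal{A}$ sits in a half-plane or a sector not containing $0$ and your winding-number step goes through directly. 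Case~(2) requires a separate remark (e.g.\ using the $C_p$-invariant lines of reflectional symmetry, or a direct Riemann--Hurwitz count in $\mathcal{A}_1$), which you should supply rather than defer.
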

\begin{proof}
	By Theorem~\ref{symmetry-inclusion}, $\Sigma p \subseteq \Sigma F_p$. In order to prove the equality, let $\sigma \in \Sigma F_p$ be non-identity.
	There is no translation in  $\Sigma {F_p}$  by assumption. It follows from Lemma~\ref{C1} that every element of $F_p$ is a rotation about the origin. Since $\sigma$ maps an unbounded Fatou component not containing the origin onto an unbounded Fatou component not containing the origin, $\sigma(\mathcal{A})$ is an unbounded Fatou component not containing the origin. By the assumption, $\sigma(\mathcal{A})= \gamma(\mathcal{A})$ for some $\gamma \in \Sigma p$. As $\gamma$ and $\sigma$ are two analytic maps agreeing on a domain $\mathcal{A}$, $\gamma=\sigma$. In other words, $\sigma \in \Sigma p$.
\end{proof}
\section{The Chebyshev's method} The proofs of Theorems~\ref{no trans} and \ref{equal sym} are to be provided in this section. \par 
Recall that the Chebyshev's method of a polynomial $p$  is defined as
\begin{equation}
C_{p}(z)=z-(1+\frac{1}{2}L_{p}(z))\frac{p(z)}{p'(z)},
\end{equation} where  $ L_{p}(z)=\frac{p(z)p''(z)}{[p'(z)]^{2}}
$. 
 The derivative of the Chebyshev's method is given by
\begin{equation}\label{deri}
C_p'(z)=\frac{L_{p}(z)^2}{2}(3-L_{p'}(z))
\end{equation}
where $L_{p'}(z)=\frac{p'(z)p'''(z)}{[p''(z)]^2}$. 
 That $C_p$ satisfies the Scaling theorem leads to a significant amount of simplification. 
\begin{Observation}\label{appli_ST}
\begin{enumerate}
\item Let $a$ and $b$ be two distinct roots of $p$ with multiplicities $k$ and $m$ respectively. Thus, $p$ is of the form $p(z)=(z-a)^k(z-b)^mp_1(z)$ where $p_1$ is a polynomial such that $p_1(a),p_1(b)\neq 0$. Then by post-composing $p$ with the affine map $T(z)=\frac{a-b}{2}z+\frac{a+b}{2}$, we get $$g(z)=p(T(z))=c(z-1)^k(z+1)^m p_1(T(z))$$ where $c$ is a nonzero constant. Since $C_p$ satisfies the Scaling theorem, we get $C_g=T^{-1} \circ C_p \circ T$. Hence, any two distinct roots of a polynomial can be taken as $-1$ and $1$.
\item Let the polynomial $q$ be of the form $q(z)=(z^n-a)^kq_1(z)$, where $n\geq 2$, $k\in \mathbb{N}$, $a=re^{i\theta}$ for $ r>0, \theta\in [0,2\pi) $, and not all the $n$-th roots of $a$ are the roots of $q_1$. Consider the map $h=q\circ \alpha$ where $\alpha(z)=r^{\frac{1}{n}}e^{\frac{i\theta}{n}}z$. Then $h(z)=a^k(z^n-1)^k q_1(r^{\frac{1}{n}}e^{\frac{i\theta}{n}}z)$. Since $C_p$ satisfies the Scaling theorem, the maps $C_q$ and $C_h$ are affine conjugate. Hence, for such a polynomial, without loss of generality we can consider $a=1$.
	\end{enumerate}
\begin{Remark}\label{normalization}
	From Observation \ref{normalization_1}(2) we deduce the following particular case of the polynomial $p$ considered in Observation \ref{appli_ST}(2). If $p(z)=\lambda z^\alpha(z^n-a)^k$ for some $\lambda,~ a\in \mathbb{C}\setminus \{0\}$ and $\alpha\in \mathbb{N}\cup\{0\}$, $n,~k\in \mathbb{N}$, then $p(a^{\frac{1}{n}}z)=\lambda a^{\frac{\alpha}{n}+k} z^\alpha(z^n-1)^k$, where we consider any $n$-th root of $a$. Note that $\Sigma p=\Sigma \tilde{p}$, where $\tilde{p}(z)=z^\alpha(z^n-1)^k$ by Observation \ref{normalization_1}(2). As the Chebyshev's method applied to a polynomial is the same as that applied to a constant multiple of it, consequently we get, $\Sigma C_p=\Sigma p$ if and only if $\Sigma C_{\tilde{p}}=\Sigma \tilde{p}$.
\end{Remark}
\end{Observation}
Now we investigate the possible elements of $\Sigma C_p$. The immediate question is that whether $\Sigma C_p$ contains a translation? Since for each polynomial $p$ of degree at least two has a finite root, the map $C_p$ has an attracting fixed point. Therefore, $\mathcal{J}(C_p)$ can not be $\widehat{\mathbb{C}}$. Also, $\infty$ is always a repelling fixed of $C_p$ (see Proposition 2.3,  \cite{Nayak-Pal2022}). It follows from Lemma \ref{Boyd} that   if $\Sigma  C_p$ contains a translation then $\mathcal{J}(C_p)$ is itself a line. Therefore, to answer the above question, it is enough to investigate the possibility of $\mathcal{J}(C_p)$ to be a line.
\begin{lemma}\label{CTI}
	For each polynomial $p$, $\mathcal{J}(C_p)$ is never a line.
\end{lemma}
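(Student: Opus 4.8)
The plan is to argue by contradiction: suppose $\mathcal{J}(C_p)$ is a line $\ell$. After applying an affine conjugacy (which is harmless by the Scaling theorem and Lemma~\ref{Cg}), we may assume $\ell$ is the real axis $\mathbb{R}$. The strategy is to extract strong structural consequences from the Fatou set being exactly the two open half-planes, and then contradict them using the special analytic form of $C_p$, in particular its critical points and the behaviour at the roots of $p$.

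First I would record that if $\mathcal{J}(C_p)=\mathbb{R}$, then $\widehat{\mathbb{C}}\setminus\mathbb{R}$ has exactly two components, the upper and lower half-planes $H^+$ and $H^-$, each of which is a Fatou component completely invariant or swapped by $C_p$ (since $C_p$ is a rational map preserving $\mathbb{R}$, it either fixes each half-plane or interchanges them). Each half-plane is simply connected, so by Riemann--Hurwitz applied to $C_p$ restricted to it, one gets tight constraints on the number of critical points of $C_p$ in each half-plane. On the other hand, $\infty$ is a repelling fixed point of $C_p$ (cited from \cite{Nayak-Pal2022}), so $\infty\in\mathbb{R}=\mathcal{J}(C_p)$, consistent but also forcing $C_p$ to map $\mathbb{R}\cup\{\infty\}$ to itself as a circle on the sphere. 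The roots of $p$ are attracting fixed points lying in the Fatou set, hence none of them lie on $\mathbb{R}$; so every root of $p$ is off the real axis, and complex conjugation pairs them up only if $p$ has real coefficients — but there is no reason $p$ has real coefficients after our normalization, so I expect the cleaner route is to pin down $p$ explicitly.

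The key step: since each half-plane is a completely invariant Fatou component containing at least one attracting fixed point (a root of $p$), and there are two half-planes, $p$ can have at most two distinct roots, one in $H^+$ and one in $H^-$ (if the half-planes are swapped, a two-cycle of half-planes carrying the basin means the roots would have to form a cycle, impossible for fixed points, so the half-planes are each invariant, each containing exactly one root-basin). Writing $p(z)=c(z-a)^k(z-b)^m$ with $a\in H^+$, $b\in H^-$, I would then invoke the analysis already set up in the paper (Observation~\ref{appli_ST}) to normalize $a=1$, $b=-1$ — but wait, $1$ and $-1$ lie on $\mathbb{R}$, contradicting that the roots are off $\ell$; so instead the correct normalization places the roots symmetrically across $\ell=\mathbb{R}$, i.e. $a=i$, $b=-i$ after a rotation, giving $p(z)=c(z^2+1)^k$ up to the multiplicities being equal (equality of multiplicities is forced by the symmetry $z\mapsto\bar z$, which must then lie in $\Sigma C_p$ and swap the two root-basins). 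For such a $p$, the Chebyshev map $C_p$ can be computed explicitly (it is a rational map commuting with $z\mapsto -z$ and $z\mapsto\bar z$), and one checks directly — via its critical points, or via the location of the free critical point / extraneous fixed point — that its Julia set is not $\mathbb{R}$; e.g. $0$ is a pole or a critical point whose orbit behaviour is incompatible with $\mathcal{J}=\mathbb{R}$, or $C_p$ has an extraneous attracting or neutral fixed point off $\mathbb{R}$ producing an extra Fatou component.

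The main obstacle I anticipate is the last step — ruling out the unicritical-type candidate $p(z)=c(z^2+1)^k$ (equivalently $z^{2k}+\text{lower}$ after translation, i.e. essentially $C_p$ for $z^n+c$, which is exactly the case studied in \cite{CCV}). Here one must actually understand the dynamics of $C_p$ for this family well enough to see a third Fatou component or a critical orbit that escapes $\mathbb{R}$; the honest way is to locate the critical points of $C_p$ from \eqref{deri} (zeros of $L_p$ and of $3-L_{p'}$), show that not all of them can be captured by the two half-plane basins consistent with $\mathcal{J}=\mathbb{R}$, and conclude. A slicker alternative, which I would try first, is to use Lemma~\ref{Boyd} in reverse together with a counting/parity argument: $\mathbb{R}$ as a Julia set forces $C_p$ to be, up to conjugacy, a Blaschke-type product or a Chebyshev-type polynomial on the circle, and the explicit degree formula for $C_p$ from \cite{Nayak-Pal2022} together with the known critical-point count makes this impossible for any admissible $p$.
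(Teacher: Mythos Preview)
Your opening reduction is correct and matches the paper: if $\mathcal{J}(C_p)$ were a line there would be exactly two Fatou components, so $p$ can have at most two distinct roots. From there, however, you normalize the \emph{line} to $\mathbb{R}$ and try to locate the roots in the two half-planes. This makes the remainder harder, and two genuine gaps appear. First, the claim that $z\mapsto\bar z$ forces the multiplicities $k=m$ is not justified: the fact that $\mathbb{R}$ is $\bar z$-invariant does not mean $C_p$ commutes with conjugation (equivalently, that $C_p$ has real coefficients), so nothing ties the root in $H^+$ to the one in $H^-$ or their multiplicities. Second, even granting $p(z)=c(z^2+1)^k$, you never actually produce the contradiction; the sketch about Blaschke products, degree counts, or a stray critical orbit is not carried out.

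The paper makes the opposite normalization and it closes immediately. It fixes the two \emph{roots} to be $\pm 1$ (Observation~\ref{appli_ST}(1)), so $p(z)=(z-1)^k(z+1)^m$ with arbitrary $k,m\ge 1$; no assumption $k=m$ is needed. Then it computes the two extraneous fixed points of $C_p$ explicitly as the roots of a quadratic, checks the discriminant is positive so they are \emph{real}, and verifies directly that both multipliers exceed $1$, so both are repelling and hence lie in $\mathcal{J}(C_p)$. If the Julia set is a line, it must therefore be the real line. But $\pm 1$ are real attracting fixed points, hence in the Fatou set --- contradiction. The whole argument is an explicit calculation on one concrete family; no half-plane dynamics, no symmetry, and no case analysis on multiplicities.
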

\begin{proof}
Suppose on the contrary that the Julia set of $C_p$ is a line for some polynomial $p$. Then $C_p$ has two Fatou components and those are the only Fatou components. If $p$ has at least three distinct roots then $\mathcal{F}(C_p)$ contains at least three attracting fixed points leading to at least three Fatou components, which can not be true. Therefore $p$ has exactly two distinct roots $a$ and $b$ with multiplicities, say $k$ and $m$ respectively, where $k,m\geq 1$. Since $C_p$ satisfies the Scaling theorem, without loss of generality assume that $p$ is monic. In view of Observation \ref{appli_ST}(1), consider $a=1$ and $b=-1$. Thus
	\begin{equation}
	p(z)=(z-1)^k(z+1)^m.
	\end{equation}
	Then $$L_p(z)=\frac{(k+m)(k+m-1)z^2+2(k-m)(k+m-1)z+(k-m)^2-(k+m)}{[(k+m)z+(k-m)]^2}$$ and
	\begin{equation}
	C_p(z)=z-\dfrac{(z-1)(z+1)f(z)}{2\{(k+m)z+(k-m)\}^3},
	\label{minimumdegree-C_p}
	\end{equation}
	where
	 \begin{equation}\label{extra_f}
	 f(z)=(k+m)(3k+3m-1)z^2+2(k-m)(3k+3m-1)z+3(k-m)^2-(k+m).
	 \end{equation}
	The extraneous fixed points of $C_p$ are the solutions of the Equation \ref{extra_f}.	The discriminant of this quadratic equation is 
	$4(k-m)^2(3k+3m-1)^2-4(k+m)(3k+3m-1)[3(k-m)^2-(k+m)] =16km(3k+3m-1)$ which is positive.
	The roots of $f(z)$ are real and those are
	\begin{align}
	& \alpha=\dfrac{-(k-m)(3k+3m-1)+ 2\sqrt{km(3k+3m-1)}}{(k+m)(3k+3m-1)}\label{extra1}\\  and ~~
	& \beta=\dfrac{-(k-m)(3k+3m-1)- 2\sqrt{km(3k+3m-1)}}{(k+m)(3k+3m-1)}\label{extra2}.
	\end{align}
	As an extraneous fixed point $z$ is a solution of Equation \ref{extra_f}, i.e., $f(z)=0$,  the multiplier of $z$ is \begin{align*}
	\lambda(z)&=1-\frac{1}{2}\frac{(z-1)(z+1)f'(z)}{[(k+m)z+(k-m)]^3}\\
	&=1-\frac{(z-1)(z+1)(3k+3m-1)}{[(k+m)z+(k-m)]^2}.
	\end{align*}
	Thus
	\begin{align*}
	& \lambda(\alpha)=1+\dfrac{(3k+3m-1)^2}{km(k+m)^2}\left[km\left(\dfrac{3k+3m-2}{3k+3m-1}\right)+(k-m)\sqrt{\dfrac{km}{3k+3m-1}}\right]~\mbox{and}~\\
	& \lambda(\beta)=1+\dfrac{(3k+3m-1)^2}{km(k+m)^2}\left[km\left(\dfrac{3k+3m-2}{3k+3m-1}\right)-(k-m)\sqrt{\dfrac{km}{3k+3m-1}}\right].
	\end{align*}
	If $k=m$ then both the multipliers are the same and it is bigger than $1$. Without loss of generality assuming $k>m$, it is clear that $\lambda(\alpha)>1$. To see $\lambda(\beta)>1$, first note that $(3k+3m-1) \left\{k^2 (m-1)+m^2(k-1)+km(2k+2m-1)\right\}+km>0 $ and the left hand side expression is  $(3k+3m-1)(3k^2 m+3k m^2-k^2-m^2-km)+km $ which is equal to $ (3k+3m-1)\left\{ km(3k+3m-1)-2km +2km-k^2 -m^2   \right\}+km$.  This is nothing but $(3k+3m-1)\left\{ km(3k+3m-1)-2km -(k-m)^2   \right\}+km $. Now rearranging the terms, we get that $km\left\{ (3k+3m-1)^2 -2(3k+3m-1)+1\right\} -(k-m)^2 (3k+3m-1)>0$ i.e., $km(3k+3m-2)^2 > (k-m)^2 (3k+3m-1)$. Thus $km\left(\dfrac{3k+3m-2}{3k+3m-1}\right)-(k-m)\sqrt{\dfrac{km}{3k+3m-1}}>0 $.  
	Therefore,  the two extraneous fixed points are repelling. 
	Since these are in the Julia set of $C_p$, the Julia set must be the real line whenever it is a line. However $C_p$ has real attracting fixed points, namely $1$ and $-1$ and that are in the Fatou set. This leads to a contradiction and the proof concludes.
\end{proof}
Now the proof of the Theorem \ref{no trans} becomes straightforward.
\begin{proof}[The proof of Theorem \ref{no trans}]
	As $\mathcal{J}(C_p)$ is not  a line for any $p$, $\Sigma C_p$ does not contain any translation.
\end{proof}
We require six lemmas for proving  Theorem~\ref{equal sym}. The first two deal with rational maps with an unbounded invariant attracting immediate basin.
\begin{lemma}
	Let $R$ be a rational map with a repelling fixed point at $\infty$. If $\mathcal{A}$  is an unbounded and invariant attracting domain of $R$ and $U$ is a Fatou component different from $\mathcal{A}$ such that $R(U) =\mathcal{A}$ then $U$ is bounded. Furthermore, if $U'$ is a Fatou component such that $R^k (U')=U$ for some $k>0$ then $U'$ is bounded.
	\label{bounded}
\end{lemma}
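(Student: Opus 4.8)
The plan is to exploit the fact that $\infty$ is repelling for $R$, so there is a neighborhood $V$ of $\infty$ on which $R$ is injective and expanding, and in particular $\infty \notin R(V \setminus \{\infty\})$ shrinks towards $\infty$ under iteration of the local inverse. First I would argue that $\infty \in \mathcal{A}$: since $\mathcal{A}$ is unbounded it accumulates at $\infty$, and being a Fatou component that is forward-invariant, if $\infty \notin \mathcal{A}$ then $\infty \in \partial \mathcal{A} \subseteq \mathcal{J}(R)$, which is consistent with $\infty$ repelling — so actually I should be careful here. The cleaner route: $\infty \in \mathcal{J}(R)$ because it is a repelling fixed point. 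Hence $\mathcal{A}$, being a Fatou component, does not contain $\infty$; it is unbounded but $\infty \in \partial \mathcal{A}$. So I must show $U$ (with $R(U) = \mathcal{A}$, $U \neq \mathcal{A}$) is bounded, i.e. $\infty \notin \overline{U}$ would be too strong — rather $\infty \notin U$; boundedness of $U$ means $\infty \notin \overline{U}$, so I need $\infty \notin \partial U$ as well.

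The key step is the following. Suppose for contradiction that $\infty \in \overline{U}$. Since $R$ is a local homeomorphism near $\infty$ (as $\infty$ is a fixed point with nonzero finite multiplier in the appropriate chart — repelling means the multiplier has modulus $>1$, in particular $R$ is locally injective at $\infty$), there is an open neighborhood $W$ of $\infty$ such that $R|_W$ is a homeomorphism onto a neighborhood $R(W)$ of $\infty$, and $R(W) \cap W$ contains a punctured neighborhood of $\infty$ on which the branch of $R^{-1}$ fixing $\infty$ is defined and attracting (contracting towards $\infty$). Now $U$ meets $W$ (since $\infty \in \overline U$), and $R(U \cap W) \subseteq \mathcal{A}$; because $\mathcal{A}$ is also unbounded it meets $R(W)$, and pulling back the local picture, the component of $R^{-1}(\mathcal{A}) \cap W$ through points near $\infty$ is mapped homeomorphically to a piece of $\mathcal{A}$ near $\infty$. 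But $\mathcal{A}$ is invariant, so this local inverse branch sends a neighborhood-germ of $\infty$ inside $\mathcal{A}$ into $\mathcal{A}$ — forcing $U$ to coincide with $\mathcal{A}$ near $\infty$, hence $U = \mathcal{A}$ by connectedness of Fatou components, contradicting $U \neq \mathcal{A}$. Therefore $\infty \notin \overline{U}$, so $U$ is bounded.

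For the "furthermore" part, I would induct on $k$. If $R^k(U') = U$ with $k > 0$, write $R(U') = U''$ where $R^{k-1}(U'') = U$; by the inductive hypothesis (the base case $k=1$ being what we just proved, with $U''$ in the role of the bounded $U$ — note $U''$ is bounded hence in particular $\infty \notin \overline{U''}$, so $U'' \neq$ any component containing $\infty$), it suffices to show: if $R(U') = U''$ and $U''$ is a bounded Fatou component, then $U'$ is bounded. If $\infty \in \overline{U'}$, then again using local injectivity of $R$ at $\infty$, $R$ maps a neighborhood-germ of $\infty$ in $U'$ onto a neighborhood-germ of $R(\infty) = \infty$ inside $U''$, forcing $\infty \in \overline{U''}$, contradicting boundedness of $U''$. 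Hence $U'$ is bounded, completing the induction.

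The main obstacle I anticipate is making the "local inverse branch / neighborhood-germ" argument precise at $\infty$ without circularity: one must pass to the chart $w = 1/z$, where $\infty$ becomes a repelling fixed point $0$ of the conjugated map $\widetilde R$, invoke the local linearization (Koenigs) or simply the local injectivity and openness of $\widetilde R$ near $0$, and check that "$U$ unbounded and $\infty \in \partial U$" translates correctly to a statement about the component of $\widetilde R^{-1}(\widetilde{\mathcal A})$ clustering at $0$. Once the picture $\widetilde R(\widetilde U) = \widetilde{\mathcal A}$ with $0 \in \partial \widetilde{\mathcal A} \cap \partial \widetilde U$ is set up, the contradiction is that $\widetilde R$ being a local homeomorphism at $0$ identifies the germ of $\widetilde U$ at $0$ with the germ of $\widetilde{\mathcal A}$ at $0$, and since distinct Fatou components are disjoint, this germ-identification forces $\widetilde U = \widetilde{\mathcal A}$.
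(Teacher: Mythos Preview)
Your ``furthermore'' part is correct and essentially matches the paper's argument. The main part, however, has a genuine gap. The crucial step is your claim that ``this local inverse branch sends a neighborhood-germ of $\infty$ inside $\mathcal{A}$ into $\mathcal{A}$'', from which $R^{-1}(\mathcal{A})\cap W\subset\mathcal{A}$ and hence $U\cap W=\emptyset$ would indeed follow. You justify this only by ``$\mathcal{A}$ is invariant''. But forward invariance $R(\mathcal{A})=\mathcal{A}$ says that $R$ carries $\mathcal{A}\cap W$ into $\mathcal{A}$; it does \emph{not} say that the local inverse $g=(R|_W)^{-1}$ carries $\mathcal{A}\cap R(W)$ into $\mathcal{A}$. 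Since $\mathcal{A}$ is not completely invariant (indeed $U\neq\mathcal{A}$ with $R(U)=\mathcal{A}$ is given), preimages of points of $\mathcal{A}$ may well lie outside $\mathcal{A}$. Your final formulation, that local injectivity of $R$ at $\infty$ ``identifies the germ of $\widetilde U$ at $0$ with the germ of $\widetilde{\mathcal A}$ at $0$'', is also not correct: injectivity of $R|_W$ only ensures that the germs of $U$ and of $\mathcal{A}$ at $\infty$ map (under $R$) to \emph{disjoint} pieces of the germ of $\mathcal{A}$ at $\infty$, not that these two germs coincide.

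The paper supplies precisely this missing link, and not by a purely local germ argument. One fixes a point $z_0\in N\cap\mathcal{A}$, so that $g(R(z_0))=z_0\in\mathcal{A}$; then for an arbitrary $z'\in R(N)\cap\mathcal{A}$ one joins $R(z_0)$ to $z'$ by an arc $\gamma$ lying in the connected set $\mathcal{A}$ and analytically continues $g$ along $\gamma$. The lifted arc $g(\gamma)$ lies in $R^{-1}(\mathcal{A})\subset\mathcal{F}(R)$ and starts at $z_0\in\mathcal{A}$, hence remains in the single Fatou component $\mathcal{A}$, giving $g(z')\in\mathcal{A}$. This path-lifting through the connected domain $\mathcal{A}$ is what upgrades the local injectivity at $\infty$ to the global statement $g(R(N)\cap\mathcal{A})\subset N\cap\mathcal{A}$, and from there the paper concludes $U\cap N=\emptyset$ (so $U$ is bounded) by the injectivity of $R$ on $N$. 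Your outline has all the right local ingredients but lacks any such connecting mechanism between the germ of $\mathcal{A}$ at $\infty$ and $\mathcal{A}$ as a global Fatou component.
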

\begin{proof}
	Let $N$ be a neighborhood of $\infty$ in which $R$ is one-one and $N \subsetneq R(N)$. This is possible since $\infty$ is a repelling fixed point of $R$. Let $g$ be the branch of $R^{-1}$ such that $g(R(N))=N$, i.e., $g(R(z))=z$ for all $z\in N$. Let $z_0 \in N \cap \mathcal{A}$. Then $z_1=R(z_0) \in R(N) \cap \mathcal{A}$. Clearly $g(z_1)=z_0$. Let $z' $ be an arbitrary point in $R(N) \cap \mathcal{A}$ different from $z_1$. Consider a Jordan arc $\gamma$ in $\mathcal{A}$ joining $z_1$ and $z'$ such that $g$ is continued analytically along $\gamma$ as a single valued analytic function by the Monodromy theorem. Further, $g(z') \in N $. Since $g(\gamma)$ is in the Fatou set and $g(z_1)=z_0 \in \mathcal{A}$, it is in a single Fatou component, which is nothing but $\mathcal{A}$. Thus $g(z') \in \mathcal{A}$ gives that $g(z') \in N \cap \mathcal{A}$. In other words, $g$ maps $R(N) \cap \mathcal{A}$ onto $N \cap \mathcal{A}$. 
	\par 
	Let $U$ be a Fatou component of $R$ different from $\mathcal{A}$ such that  $R(U)=\mathcal{A}$. If  $z \in U$ and $w \in R(N) \cap \mathcal{A} $ such that $R(z)=w$ then $z \notin N$. This follows from the conclusion of the previous paragraph and the fact that $R$ is one-one in $N$. In other words, $U \cap R^{-1}( R(N) \cap \mathcal{A})$ is bounded. Now $\mathcal{A} \setminus R(N)$ is bounded. Since the pre-image of every bounded set under $R$ is bounded (since the image of each unbounded set under $R$ is unbounded), the pre-image $R^{-1}(\mathcal{A} \setminus R(N))$ is a bounded set.  
	This implies that $ U \cap R^{-1}(\mathcal{A} \setminus R(N)) $ is bounded. Therefore $U=[U \cap R^{-1}( R(N) \cap \mathcal{A})]\cup [ U \cap R^{-1}(\mathcal{A} \setminus R(N))]$ is bounded.
	\par Being a repelling fixed point, $\infty$ is in the Julia set of $R$. It is on the boundary of each unbounded Fatou component of $R$. Further, the $R^k$-image of every unbounded Fatou component contains $\infty$ on its boundary (since $\infty$ is fixed by $R^k$ and $R^k$ maps the boundary of a Fatou component $V$ onto the boundary of $R^k(V)$) for all $k>0$. Since $U$ is bounded, $U'$ is bounded.
\end{proof}
\begin{lemma}[Lemma 4.3, \cite{Nayak-Pal2022}]\label{Nayak_Pal}
	If $R$ is a rational map with $\infty$ as a repelling fixed point and $\mathcal{A}$ is an invariant, unbounded immediate basin of attraction then $\partial \mathcal{A}$ contains at least one pole. Moreover, if $\mathcal{A}$ is simply connected and $\partial \mathcal{A}$ contains all the poles then $\mathcal{J}(R)$ is connected.
\end{lemma}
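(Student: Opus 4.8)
For the first assertion the plan is to exploit that $\infty$ is a repelling fixed point, so $R$ is injective with local degree one on some neighbourhood $N$ of $\infty$. Consequently, for any $\zeta\in\mathcal A$ sufficiently close to $\infty$, exactly one $R$-preimage of $\zeta$ lies in $N$ (the point $g(\zeta)$ of Lemma~\ref{bounded}), while the remaining $R$-preimages stay in a fixed compact subset of $\mathbb C$: if two preimages both tended to $\infty$ they would contradict injectivity on $N$. Since $\mathcal A$ is the immediate basin of an attracting fixed point it contains a critical point, so the proper restriction $R|_{\mathcal A}\colon\mathcal A\to\mathcal A$ has degree $k\geq 2$; hence, choosing $\zeta$ to be a regular value, it has at least one preimage $\eta(\zeta)\in\mathcal A$ distinct from the one in $N$, and this $\eta(\zeta)$ is bounded away from $\infty$. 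Now let $\zeta\to\infty$ along the unbounded part of $\mathcal A$ (possible as $\mathcal A$ is unbounded) through regular values. By compactness the points $\eta(\zeta)$ subconverge to a finite $w\in\overline{\mathcal A}$, and continuity of $R$ on $\widehat{\mathbb C}$ gives $R(w)=\lim R(\eta(\zeta))=\infty$. Thus $w$ is a finite pole lying in $\overline{\mathcal A}$; since poles belong to $\mathcal J(R)$ we conclude $w\in\partial\mathcal A$, proving the first part.

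For the second assertion I would first reduce to a purely topological condition: for a rational map, $\mathcal J(R)$ is connected if and only if every Fatou component is simply connected. This is the sphere duality that each complementary component of a connected compact subset of $\widehat{\mathbb C}$ is simply connected, and conversely. So it suffices to show that no Fatou component of $R$ has a hole. Because $\mathcal A$ is assumed simply connected, $\overline{\mathcal A}$ is a connected continuum, whence every component $W$ of $\widehat{\mathbb C}\setminus\overline{\mathcal A}$ is simply connected; moreover every Fatou component $V\neq\mathcal A$ is disjoint from $\overline{\mathcal A}$ (it avoids $\partial\mathcal A\subseteq\mathcal J(R)$) and so lies in one such $W$. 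Finally $R(\overline{\mathcal A})=\overline{\mathcal A}$, which yields $R^{-1}(\widehat{\mathbb C}\setminus\overline{\mathcal A})\subseteq\widehat{\mathbb C}\setminus\overline{\mathcal A}$; in particular the region off $\overline{\mathcal A}$ carries no pole and does not contain $\infty$, so $R$ is pole-free there.

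The heart of the matter, and the step I expect to be the main obstacle, is to rule out \emph{inner} holes. Suppose some Fatou component $V$ had a complementary component $K$ that does not contain $\overline{\mathcal A}$. Then $K$ would be a bounded continuum disjoint from $\overline{\mathcal A}$, hence containing no pole and not the point $\infty$, since by hypothesis $R^{-1}(\infty)\subseteq\partial\mathcal A\subseteq\overline{\mathcal A}$. I would derive a contradiction by feeding the accumulation mechanism of the first part into the Riemann--Hurwitz relation for the proper maps $R^n\colon U\to R^n(U)$ along the backward orbit producing the hole: using that $\mathcal A$ is unbounded and that all preimages of $\infty$ sit on $\partial\mathcal A$, one forces each bounded complementary component to surround a point of $R^{-1}(\infty)$ (informally, "holes must carry poles," the $\delta$ inverse channels meeting at each pole distributing the connectivity), which $K$ cannot do. Once every Fatou component is shown to be simply connected, the topological equivalence gives that $\mathcal J(R)$ is connected. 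Making the phrase ``holes carry poles'' precise under only the stated pole hypothesis — in particular excluding multiply connected periodic components such as Herman rings — is the delicate point; the boundedness of the non-$\mathcal A$ preimage components supplied by Lemma~\ref{bounded}, together with the local degree-one behaviour of $R$ at $\infty$, are the tools I would rely on to control the pull-back.
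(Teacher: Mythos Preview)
The paper does not prove this lemma; it is quoted verbatim from \cite{Nayak-Pal2022}, so there is no in-paper argument to compare against. I therefore assess your proposal on its own merits.

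Your argument for the first assertion is sound. Local injectivity of $R$ at the repelling fixed point $\infty$ forces all but one $R$-preimage of a point $\zeta\in\mathcal A$ near $\infty$ to remain in a fixed compact subset of $\mathbb C$; since $R\colon\mathcal A\to\mathcal A$ is proper of degree at least $2$ (the basin contains a critical point), these bounded preimages in $\mathcal A$ accumulate, as $\zeta\to\infty$, on a finite pole lying in $\overline{\mathcal A}\cap\mathcal J(R)=\partial\mathcal A$.

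For the second assertion your sketch is, as you yourself acknowledge, incomplete: the slogan ``holes carry poles'' is not made precise, and your outline does not actually exclude Herman rings or multiply connected wandering components. There is a much shorter route that sidesteps this difficulty. A simply connected proper subdomain of $\widehat{\mathbb C}$ has \emph{connected} boundary; since $\mathcal A$ is unbounded and $\infty\in\mathcal J(R)$, we have $\infty\in\partial\mathcal A$, so the connected set $\partial\mathcal A$ lies entirely in the unbounded component of $\mathcal J(R)$. By hypothesis this component then contains every pole. Now the argument of Lemma~\ref{connected J'set} applies verbatim: if $\mathcal J(R)$ were disconnected, a simple closed curve in $\mathcal F(R)$ would surround a Julia point, and some forward iterate of it would separate a pole from $\infty$ by a curve in the Fatou set, contradicting that the pole and $\infty$ lie in the same Julia component. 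This replaces your unfinished Riemann--Hurwitz bookkeeping by a single topological fact about simply connected domains on the sphere.
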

The next result provides  a condition that ensures a connected Julia set.
\begin{lemma}\label{connected J'set}
	Let $R$ be a rational map and $\infty$ be a repelling fixed point of $R$. If the unbounded Julia component of $R$ contains all the poles of $R$ then $\mathcal{J}(R)$ is connected. Consequently, all the Fatou components are simply connected.
\end{lemma}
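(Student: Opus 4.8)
The plan is to deduce this from Lemma~\ref{Nayak_Pal} together with a connectivity argument that upgrades "the unbounded Julia component contains all poles" to the hypotheses needed there. The first step is to identify the unbounded Fatou components. Since $\infty$ is a repelling fixed point, it lies in the Julia set, and in particular on the boundary of every unbounded Fatou component; moreover, as observed in the proof of Lemma~\ref{bounded}, $R^k$ maps the boundary of an unbounded Fatou component to a set whose closure contains $\infty$. I would first argue that there is a unique unbounded Fatou component. Indeed, if $U_1 \neq U_2$ were two unbounded Fatou components, then a neighborhood $N$ of $\infty$ on which $R$ is injective would meet both, and pulling back along the branch of $R^{-1}$ fixing $\infty$ (exactly the $g$ constructed in Lemma~\ref{bounded}) would force $U_1 = U_2$, a contradiction. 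Hence there is a single unbounded Fatou component, call it $\mathcal{A}_\infty$.

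Next I would show $\mathcal{A}_\infty$ is invariant and is an immediate basin of attraction. The branch $g$ of $R^{-1}$ fixing $\infty$ maps $N \cap \mathcal{A}_\infty$ into a Fatou component that must be unbounded (its boundary contains $\infty$), hence into $\mathcal{A}_\infty$ itself; iterating $g$ on a neighborhood of $\infty$ shows points near $\infty$ stay near $\infty$ under $g$... but since $\infty$ is repelling for $R$, actually $R(\mathcal{A}_\infty)$ is a Fatou component containing points arbitrarily close to $\infty$, so $R(\mathcal{A}_\infty)$ is the unbounded component, i.e.\ $R(\mathcal{A}_\infty) = \mathcal{A}_\infty$. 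Since $\infty \in \partial\mathcal{A}_\infty$ is repelling, $\mathcal{A}_\infty$ cannot be a rotation domain or a parabolic domain attached to $\infty$; it is an attracting or superattracting basin (or a Baker-type domain, which cannot occur for rational maps), so $\mathcal{A}_\infty$ is an invariant unbounded immediate basin of attraction.

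Now the hypothesis that the unbounded Julia \emph{component} contains all poles translates into: $\partial \mathcal{A}_\infty$, which is contained in the unbounded Julia component, contains all the poles of $R$ — here I use that every pole is an iterated preimage of $\infty$, hence lies in the Julia set, and if a pole lay in a bounded Julia component then the unbounded Julia component would not contain it; so the assumption is precisely that all poles are in the unbounded Julia component, and since $\partial\mathcal{A}_\infty$ is a subset of that component it is consistent to place all the poles on $\partial\mathcal{A}_\infty$. To invoke Lemma~\ref{Nayak_Pal} I also need $\mathcal{A}_\infty$ simply connected, which is exactly what I would need to rule out: if $\mathcal{A}_\infty$ were multiply connected, its complement would have a bounded component $K$ meeting the Julia set, but $K$ together with $\partial\mathcal{A}_\infty$ would be disconnected from the part of the Julia set reachable through $\infty$ — I would make this precise by noting that a multiply connected $\mathcal{A}_\infty$ forces the Julia set, which equals $\partial\mathcal{A}_\infty$ together with everything it surrounds, to be disconnected, contradicting... no: rather, I would argue directly that once all poles lie on $\partial\mathcal{A}_\infty$, the set $R^{-1}(\widehat{\mathbb{C}} \setminus \mathcal{A}_\infty)$ has no component surrounding $\mathcal{A}_\infty$, forcing $\mathcal{A}_\infty$ simply connected by a standard Riemann-Hurwitz / pullback argument (as in the proof of Lemma~\ref{Nayak_Pal}). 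With $\mathcal{A}_\infty$ simply connected, unbounded, invariant, and containing all poles on its boundary, Lemma~\ref{Nayak_Pal} gives that $\mathcal{J}(R)$ is connected. Finally, connectedness of $\mathcal{J}(R)$ for a rational map is equivalent to all Fatou components being simply connected, which is the asserted consequence.

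\medskip

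The main obstacle I expect is the simple-connectivity step: showing that "all poles on the boundary of the unbounded invariant basin" forces that basin to be simply connected. This is the crux where one must carefully use that the only way $\mathcal{A}_\infty$ can fail to be simply connected is to have a complementary component $K$ (containing Julia points) which $R$ must then wrap around, and wrapping requires a pole in the interior of $K$ or a critical structure that again produces a pole off $\partial\mathcal{A}_\infty$ — contradicting the hypothesis. One has to be careful that a priori the hypothesis is stated for the unbounded Julia \emph{component}, not literally for $\partial\mathcal{A}_\infty$, so a small lemma identifying the unbounded Julia component with (a set containing) $\partial\mathcal{A}_\infty$ is needed first, and the uniqueness of the unbounded Fatou component (Step~1) is what makes that identification clean.
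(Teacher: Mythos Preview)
Your approach has a fatal error at the very first step: the claim that there is a \emph{unique} unbounded Fatou component is simply false under the stated hypotheses, and in fact fails in precisely the situations where the paper applies this lemma. For instance, for $p(z)=z(z^2-1)$ the Chebyshev map $C_p$ has $\infty$ as a repelling fixed point and three distinct unbounded Fatou components $\mathcal{A}_{-1},\mathcal{A}_0,\mathcal{A}_1$ (established in the proof of Theorem~\ref{equal sym}(3)); for $p(z)=z^n-1$ there are $n$ of them. The injectivity of $R$ on a neighborhood $N$ of $\infty$ in no way prevents $N$ from meeting several Fatou components --- the argument from Lemma~\ref{bounded} only shows that the local branch $g$ of $R^{-1}$ maps $R(N)\cap U$ into $N\cap U$ for \emph{each} unbounded component $U$ separately, which is perfectly compatible with there being many such $U$. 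Once Step~1 collapses, the subsequent identification of an ``$\mathcal{A}_\infty$'' as a single invariant attracting basin carrying all the poles on its boundary, and the invocation of Lemma~\ref{Nayak_Pal}, have nothing to stand on.

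The paper's proof is entirely different and much more direct: assume $\mathcal{J}(R)$ is disconnected, pick a simple closed curve $\gamma$ in the Fatou set whose bounded complementary component contains a Julia point, and use density of backward orbits in $\mathcal{J}(R)$ to find $k$ so that $R^k(\gamma)$ (a bounded closed curve in $\mathcal{F}(R)$) surrounds a pole $\tau$. Then $\tau$ is separated from $\infty$ by a curve in the Fatou set, so $\tau$ and $\infty$ lie in different Julia components, contradicting the hypothesis that the unbounded Julia component contains all poles. Simple connectivity of the Fatou components then follows from the standard equivalence (Theorem~5.1.6 in Beardon). No uniqueness of unbounded Fatou components, no appeal to Lemma~\ref{Nayak_Pal}, and no Riemann--Hurwitz bookkeeping is needed.
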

\begin{proof}
On the contrary, suppose that $\mathcal{J}(R)$ is not connected. Then a simple closed curve $\gamma$ in the Fatou set can be chosen such that the bounded component of its complement contains a  point of $ \mathcal{J}(R)$. In this case, we say that $\gamma$ surrounds a point of the Julia set. Since the Julia set is the closure of the backward orbit of any  point in it (see Theorem 4.2.7, \cite{Beardon_book}), there exists a $k$ such that $R^k(\gamma)$ is a closed and bounded curve (not necessarily simple) lying in the Fatou set which surrounds a pole, say $\tau$ of $R$. Thus, $\infty$ is separated from $\tau$ by a closed curve lying in the Fatou set. That contradicts our assumption that the unbounded Julia component contains all the poles of $R$. Hence the Julia set is connected and from Theorem 5.1.6, \cite{Beardon_book}, we conclude that all the Fatou components are simply connected. 
\end{proof}
We say a rational map $R=\frac{P}{Q}$ is with real coefficients if all the coefficients of the polynomials $P$ and $Q$ are real numbers.
\begin{lemma}\label{invariant}
	If a  rational map is odd and with real coefficients, then its Julia set (and thus the Fatou set) is preserved under $z \mapsto -\bar{z}$.
\end{lemma}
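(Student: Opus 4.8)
The plan is to observe that each of the two hypotheses on $R$ forces a symmetry of $\mathcal{J}(R)$, and that $z\mapsto -\bar z$ is exactly the composition of these two symmetries. Write $\rho(z)=-\bar z$, $c(z)=\bar z$ and $m(z)=-z$, so that $\rho=m\circ c=c\circ m$, and note that $\rho$ is an orientation-reversing homeomorphism of $\widehat{\mathbb{C}}$ fixing $\infty$.

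First I would record the relevant commutation relations. Since $R=P/Q$ with $P,Q$ real polynomials, $\overline{P(\bar z)}=P(z)$ and $\overline{Q(\bar z)}=Q(z)$, hence $\overline{R(\bar z)}=R(z)$; equivalently $c\circ R=R\circ c$. Since $R$ is odd, $R(-z)=-R(z)$, i.e.\ $m\circ R=R\circ m$. Composing these,
\[
R(\rho(z))=R(-\bar z)=-R(\bar z)=-\overline{R(z)}=\rho(R(z)),
\]
so $\rho\circ R=R\circ\rho$: the (anti-holomorphic) homeomorphism $\rho$ conjugates $R$ to itself.

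Next I would invoke conjugacy-invariance of the Julia set. If $\phi$ is any homeomorphism of $\widehat{\mathbb{C}}$ (holomorphic or anti-holomorphic) with $\phi\circ R=R\circ\phi$, then $\{R^n\}_{n\ge 0}=\{\phi\circ R^n\circ\phi^{-1}\}_{n\ge 0}$ is equicontinuous at $\phi(w)$ if and only if $\{R^n\}_{n\ge0}$ is equicontinuous at $w$, because $\phi$ and $\phi^{-1}$ are uniformly continuous on the compact space $\widehat{\mathbb{C}}$. Hence $\phi(\mathcal{F}(R))=\mathcal{F}(R)$ and $\phi(\mathcal{J}(R))=\mathcal{J}(R)$. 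For holomorphic $\phi$ this is Theorem 3.1.4 of \cite{Beardon_book}, already cited; the anti-holomorphic case is proved the same way since only the topological properties of $\phi$ are used. Applying this with $\phi=\rho$ gives $\rho(\mathcal{J}(R))=\mathcal{J}(R)$, and the assertion about $\mathcal{F}(R)$ follows because $\mathcal{F}(R)=\widehat{\mathbb{C}}\setminus\mathcal{J}(R)$ and $\rho$ is a bijection of $\widehat{\mathbb{C}}$.

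The argument has essentially no obstacle; the only point needing a word of care is the justification that Julia-set invariance under conjugacy survives when the conjugating map is merely anti-holomorphic. If one prefers to sidestep this entirely, an alternative is to factor $\rho=m\circ c$ and argue in two steps: $\mathcal{J}(R)$ is invariant under the M\"obius map $m$ because $m$ commutes with $R$ (Theorem 3.1.4, \cite{Beardon_book}), and $\mathcal{J}(R)$ is invariant under $c$ because $R^n$ has real coefficients for every $n$, so the repelling periodic points of $R$ form a set symmetric about the real axis, and $\mathcal{J}(R)$ is the closure of that set; then $\rho(\mathcal{J}(R))=m(c(\mathcal{J}(R)))=m(\mathcal{J}(R))=\mathcal{J}(R)$.
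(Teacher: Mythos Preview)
Your proof is correct and follows essentially the same route as the paper: derive the conjugacy $\rho\circ R=R\circ\rho$ from oddness and reality of coefficients, then conclude $\rho(\mathcal{J}(R))=\mathcal{J}(R)$ via conjugacy-invariance of the Julia set. If anything you are more careful than the paper, which simply cites Theorem~3.1.4 of \cite{Beardon_book} without remarking that $\rho$ is anti-holomorphic; your explicit equicontinuity justification (and the alternative factorization $\rho=m\circ c$) fills precisely that small gap.
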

\begin{proof}
	As $R$ is odd,  $R(-z)=-R(z)$ for all $z$. Further, since all the coefficients of $R$ are real, $\overline{R(\bar{z})}=R (z)$ for all $z$. Thus $\phi \circ R \circ \phi^{-1}=R$ where $\phi(z)=-\bar{z}$. It now follows from Theorem 3.1.4,~\cite{Beardon_book} that $\phi (\mathcal{J}(R))=\mathcal{J}(R)$.
\end{proof}
We need a well-known result (for example see Lemma 4.1,~\cite{Nayak-Pal2022}) concerning the relation of Fatou components of a rational map and its critical points. 
\begin{lemma}
	Let $U$ be a periodic Fatou component of  a rational map $R$ and $C_R$ be the set of all its critical points.
	\begin{enumerate}
		\item 	If $U$ is an immediate attracting basin or an immediate parabolic basin  then  $U \cap C_R \neq \emptyset$.
		\item 	If $U$ is a Siegel disk or a Herman ring  then its boundary is contained in the closure of  $\{R^n(c):n \geq 0~\mbox{and}~c \in C_R \}$.
	\end{enumerate}
	\label{basic-dynamics-lemma}
\end{lemma}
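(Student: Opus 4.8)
The plan is to reduce everything to an invariant component and then exploit the local normal form near the relevant periodic point. Write $k$ for the period of $U$, so that $U$ is an invariant Fatou component of the first-return map $g=R^{k}$; since $U$ is invariant, $g|_{U}\colon U\to U$ is a proper holomorphic self-map of some finite degree. Denote by $C_{g}$ the critical set of $g$. By the chain rule a point $z\in U$ is a critical point of $g$ exactly when some $R^{j}(z)$, $0\le j\le k-1$, is a critical point of $R$, so the whole argument will aim at locating a point of $C_{g}$ inside $U$.

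For part (1), suppose first that $U$ is an immediate attracting basin, so $g$ has an attracting fixed point $z_{0}\in U$ with multiplier $\lambda$, $0\le|\lambda|<1$. If $\lambda=0$ the Böttcher coordinate makes $z_{0}$ itself a critical point of $g$ and we are done, so assume $0<|\lambda|<1$. I would argue by contradiction: if $U\cap C_{g}=\emptyset$, then the Koenigs linearising coordinate $\phi$ near $z_{0}$ (conjugating $g$ to $w\mapsto\lambda w$, so $\phi\circ g=\lambda\phi$) can be analytically continued, along the well-defined inverse branches of $g$ on the critical-point-free component $U$, to a univalent map on all of $U$. Then $\phi(U)=\phi(g(U))=\lambda\,\phi(U)$ is invariant under the expansion $w\mapsto w/\lambda$, which forces $\phi(U)=\mathbb{C}$ and hence $U=\widehat{\mathbb{C}}\setminus\{\text{pt}\}$, contradicting $\deg R\ge2$ and $\mathcal{J}(R)\neq\emptyset$. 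The parabolic case is handled identically after replacing the Koenigs coordinate by a Fatou coordinate on an attracting petal inside $U$ (conjugating $g$ to $w\mapsto w+1$ on a half-plane) and running the same maximal-continuation argument. In either case $U$ contains a critical point of $g$.

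For part (2), $g|_{U}$ is conformally conjugate to an irrational rotation, on a disk for a Siegel disk and on an annulus for a Herman ring. I would show $\partial U\subseteq P:=\overline{\{R^{n}(c):n\ge0,\ c\in C_{R}\}}$. If some boundary arc of $U$ avoided $P$, then the linearising (rotation) coordinate, defined and injective on $U$, could be analytically continued across that arc as a conjugacy, since no postcritical point is encountered there, producing a strictly larger rotation domain; this contradicts the maximality of $U$ as a Fatou component of the open, $g$-invariant Fatou set. Hence every boundary point of $U$ lies in $P$, which is the desired conclusion.

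The main obstacle is the step in part (1) that converts ``$U$ contains a critical point of $g=R^{k}$'' into the statement \emph{as worded}, namely $U\cap C_{R}\neq\emptyset$. A critical point of $g$ in $U$ only guarantees, through the chain-rule dictionary above, that some component $R^{j}(U)$ of the cycle contains a critical point of $R$; pinning that critical point to $U$ itself requires $j=0$. This is automatic exactly when $U$ is invariant, i.e.\ $k=1$, which is precisely the relevant case here, since the lemma is applied to the immediate basins of the roots of $p$, each of which is a fixed point of $C_{p}$. I would therefore frame the conclusion for the invariant component (the situation used throughout the sequel), which is the cleanest route and matches the intended applications; tracking the index $j$ otherwise only records that the produced critical point sits in $R^{j}(U)$ rather than in $U$.
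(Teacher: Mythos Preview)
The paper does not prove this lemma at all; it is quoted as a well-known result with a reference to Lemma~4.1 of \cite{Nayak-Pal2022} (see the sentence introducing the lemma), so there is no in-paper argument to compare your proposal against.

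Your sketch is the standard textbook proof (Koenigs/B\"ottcher/Fatou coordinates plus a maximal-extension contradiction for part~(1), and the ``extend the linearisation across any boundary arc missing the postcritical closure'' argument for part~(2)), and it is essentially correct. More importantly, you have put your finger on a genuine imprecision in the \emph{statement}: as written for an arbitrary periodic $U$ of period $k>1$, the conclusion $U\cap C_R\neq\emptyset$ is too strong---the classical theorem only places a critical point of $R$ somewhere in the cycle $\{U,R(U),\dots,R^{k-1}(U)\}$, and simple examples (e.g.\ a quadratic polynomial with an attracting $2$-cycle) show that not every component of the cycle need contain a critical point. Your observation that every use of the lemma in this paper is to an \emph{invariant} immediate basin (the basin of a fixed point of $C_p$), so that $k=1$ and the issue evaporates, is exactly the right way to reconcile the statement with its applications.
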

An elementary result is also required to be used frequently.
\begin{lemma}
	\begin{enumerate}
		\item Let $f: (-\infty,a] \to \mathbb{R}$ be such that $f'(x)>0$ and $f(x)>x$ for each $x< a$. If $f(a)=a$  then $\lim\limits_{n \to \infty}f^n(x)=a$ for all $x  \leq a$.
		\item Let $f: [b,\infty) \to \mathbb{R}$  be such that $f'(x)>0$ and $f(x)< x$ for each $x > b$. If $f(b)=b$  then $\lim\limits_{n \to \infty}f^n(x)=b$ for all $x  \geq b$.
	\end{enumerate}
	\label{monotone}
\end{lemma}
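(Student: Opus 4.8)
The plan is to prove part (1) directly by a standard monotone-convergence argument, and then deduce part (2) from (1) by the reflection $x \mapsto -x$.

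For part (1), I would fix $x \le a$. The case $x = a$ is trivial: $f(a) = a$ forces $f^n(x) = a$ for all $n$. So assume $x < a$. Since $f' > 0$ on $(-\infty,a]$, the map $f$ is strictly increasing there, and together with $f(a) = a$ this gives $f(y) < f(a) = a$ for every $y < a$; hence $f$ carries $(-\infty,a)$ into itself and all iterates $f^n(x)$ are well-defined and lie in $(-\infty,a)$. The hypothesis $f(y) > y$ for $y < a$ then shows $\{f^n(x)\}_{n \ge 0}$ is strictly increasing, and being bounded above by $a$ it converges to some $\ell \in (-\infty,a]$. Since $f$ is differentiable, hence continuous, letting $n \to \infty$ in $f^{n+1}(x) = f(f^n(x))$ yields $f(\ell) = \ell$. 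But $f(y) > y$ for all $y < a$ means $f$ has no fixed point strictly below $a$, while $f(a) = a$; thus $a$ is the unique fixed point of $f$ in $(-\infty,a]$, forcing $\ell = a$.

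For part (2), I would apply (1) to the auxiliary function $g \colon (-\infty,-b] \to \mathbb{R}$, $g(x) = -f(-x)$, with the role of $a$ played by $-b$. One checks $g'(x) = f'(-x) > 0$, that $g(-b) = -f(b) = -b$, and that for $x < -b$ we have $-x > b$, so $f(-x) < -x$, hence $g(x) = -f(-x) > x$. Part (1) then gives $g^n(x) \to -b$ for every $x \le -b$; since $g^n(x) = -f^n(-x)$ by an immediate induction, this is exactly $f^n(y) \to b$ for every $y \ge b$.

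I do not expect any genuine obstacle here: the lemma is elementary. The only points deserving a line of care are verifying that $f$ (respectively $g$) maps its half-line into itself so that the iteration is defined, and noting that the limit $\ell$ lies in the closed interval where continuity and the uniqueness of the fixed point can be invoked.
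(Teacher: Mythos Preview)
Your argument is correct. The paper itself does not supply a proof of this lemma at all; it simply introduces it as ``an elementary result'' and states it without proof, so there is nothing to compare against beyond confirming that your monotone-convergence argument and the reflection $x\mapsto -x$ for part~(2) are sound, which they are. One cosmetic point: the hypothesis gives $f'(x)>0$ only for $x<a$, not on all of $(-\infty,a]$, but your argument never actually uses $f'(a)>0$ (the inequality $f(y)<f(a)=a$ for $y<a$ follows from the mean value theorem on $[y,a]$, and if the limit $\ell$ were strictly less than $a$ you get continuity at $\ell$ from differentiability there, forcing $f(\ell)=\ell$, a contradiction), so nothing is lost.
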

We are now in a position to prove the Conjecture~\ref{conj} in certain cases.
\subsection{Polynomial having two distinct roots with the same multiplicity}
A polynomial $p$ having exactly two roots with the same multiplicity is of the form $p(z)=\lambda(z-a)^k(z-b)^k$ where $\lambda\in\mathbb{C}\setminus \{0\}$, $a,b\in \mathbb{C}$ and $a\neq b$. Note that $p$ is centered if and only if $a=-b$ i.e., $p(z)=\lambda(z^2-a^2)^k$. In this case the symmetry group of Julia set of $p$ contains exactly two elements, namely rotations of order two about the origin (Theorem 9.5.4 \cite{Beardon_book}).
\begin{proof}[Proof of the Theorem \ref{equal sym}(1)]
	In view of Remark \ref{normalization}, without loss of generality, assume that $p(z)=(z^2-1)^k$. In this case $\Sigma p=\{I, z\mapsto -z\}\subseteq \Sigma C_p$ (by Corollary \ref{Sym_C_p}). The points $1$ and $-1$ are the attracting fixed points of $C_p$ and let $\mathcal{A}_1$ and $\mathcal{A}_{-1}$ be their respective immediate basins.
\par  
	If the roots of $p$ are simple, i.e., $k=1$ then $$C_p(z)=\frac{3z^4+6z^2-1}{8z^3}~\mbox{and}~ C_p'(z)=\frac{3(z^2-1)^2}{8z^4}.$$
The point $0$ is a multiple pole and therefore is a critical point of $C_p$ with multiplicity $2$. The only other critical points are the roots of $p$, i.e., $\pm 1$ with multiplicity $2$ each.
	 Note that  $C_p'(x)>0$  and $C_p(x)-x=-\frac{(x^2-1)(5x^2-1)}{8x^3}>0$ for all $x  < -1$. Therefore, $\lim\limits_{n \to \infty}C_p^n(x)=-1 $ for all $x \leq -1$ by Lemma~\ref{monotone}(1). In other words, $(-\infty, -1] \subset \mathcal{A}_{-1}$. Since the map $z\mapsto -z$ preserves  $\mathcal{F}(C_p)$, $[1,\infty)\subset \mathcal{F}(C_p)$. In fact $[1,\infty)\subset \mathcal{A}_1$.
	 Since there is no critical point in $\mathcal{F}(C_p)$ other than $\pm 1$, $\mathcal{F}(C_p)$ is the union of the basins of $1$ and $-1$. This follows from Lemma~\ref{basic-dynamics-lemma}.
In other words, if $U'$ is a Fatou component of $C_p$ then there is a $k \geq 0$ such that $C_p ^k (U')=\mathcal{A}_1$ or $\mathcal{A}_{-1}$. Now it follows from Lemma~\ref{bounded} that $U'$ is bounded.
Thus $\mathcal{A}_{\pm 1}$ are the only unbounded Fatou components of $C_p$. Note that $\sigma(\mathcal{A}_1)=\mathcal{A}_{-1}$ where $\sigma(z)=-z$. It follows from  Theorem~\ref{no trans} and Lemma~\ref{symmetry-equality} that $\Sigma C_p =\Sigma p$.
\par
If $k\geq 2$ then, by Equation \ref{minimumdegree-C_p}, $$C_p(z)=\frac{(2k-1)(4k-1)z^4+6kz^2-1}{8k^2z^3}$$ and 
$$C_p'(z)=\frac{(2k-1)(4k-1)z^4-6kz^2+3}{8k^2z^4}.$$
Note that $0$ is a multiple pole and hence a critical point lying in the Julia set of $C_p$.
The nonzero critical points of $C_p$ are the solutions of $(2k-1)(4k-1)z^4-6kz^2+3=0$. Taking $z^2=w$, it is seen that $w=\frac{3k\pm \sqrt{-3(5k-1)(k-1)}}{(2k-1)(4k-1)} $. As $k>1$, $w$ is non-real and so also its square roots. Now the set of all the nonzero critical points of $C_p$ is $\{c, -c, \bar{c}, -\bar{c}\}$ for some $c \in \mathbb{C} \setminus \mathbb{R}$.
There is a critical point, say $c$  in $\mathcal{A}_1$ (see Lemma \ref{basic-dynamics-lemma}). It follows from Lemma~\ref{invariant} that $\mathcal{A}_1$ is symmetric with respect to the real line and therefore it contains $\bar{c}$. It also follows from the same lemma that $-c, -\bar{c} \in \mathcal{A}_{-1}$.
\par  
As $k\geq 2$, $C_p'(x)=\frac{(2k-1)(4k-1)x^4-6kx^2+3}{8k^2x^4}>\frac{6kx^2(x^2-1)+3}{8k^2x^4}>0$ for all $x \leq -1$. Also, $C_p(x)-x=-\frac{(x^2-1)\{(6k-1)x^2-1\}}{8k^2x^3}>0$ gives that $C_p(x)>x$ for $x <  -1$. Using the same argument as in $k=1$ case, it is found that  $ (-\infty,-1] \subset \mathcal{A}_{-1}$ and $[1, \infty) \subset \mathcal{A}_{1} $. That   $\mathcal{F}(C_p)$ is the union of the basins of $\pm 1$, $\mathcal{A}_{\pm 1}$ are the only unbounded Fatou components of $C_p$ and consequently $\Sigma p = \Sigma C_p$ follow also from the same arguments. 
\par Now, we are to prove the connectedness of the Julia set for each 
$k \geq 1$.   The point $0$ is the only pole of $C_p$ and is on the boundary of both $\mathcal{A}_{1}$ and  $\mathcal{A}_{-1}$ by Lemma~\ref{Nayak_Pal}. The unboundedness of $\mathcal{A}_{1}$ and  $\mathcal{A}_{-1}$ gives that the Julia component containing $0$ is unbounded. Now it follows from Lemma~\ref{connected J'set} that   $\mathcal{J}(C_p)$ is connected.
\end{proof}
\begin{Remark}
\begin{enumerate}
		\item For $k=1$, the extraneous fixed points are $\pm \frac{1}{\sqrt{5}}$ and that are repelling with multiplier $6$ each (see Equations \ref{extra1}, \ref{extra2}). For $k >1$, it follows from the proof of Lemma~\ref{CTI} that all the extraneous fixed points are repelling.
 \item The imaginary axis is forward invariant under $C_p$  and therefore it is contained in $\mathcal{J}(C_p)$ in the theorem above. 
\end{enumerate}
\end{Remark}

\begin{figure}[h!]
	\begin{subfigure}{.5\textwidth}
		\centering
		\includegraphics[width=0.9\linewidth]{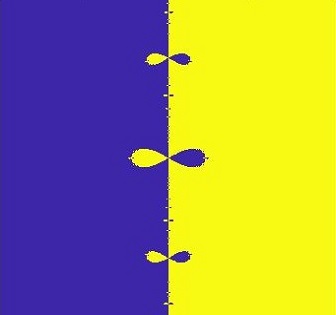}
		\caption{$\mathcal{J}(C_p)$ for $p(z)=(z-1)^3(z+1)^3$}
	\end{subfigure}
	\begin{subfigure}{.5\textwidth}
		\centering
		\includegraphics[width=0.9\linewidth]{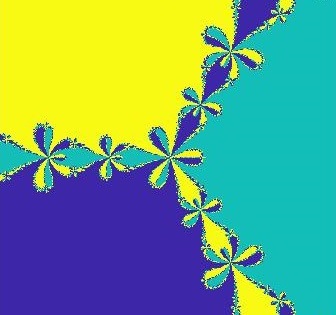}
		\caption{$\mathcal{J}(C_p)$ for $p(z)=z^3-1$}
	\end{subfigure}
	\caption{The Julia sets of $C_p$; (a) $p$ has exactly two roots with the same multiplicity, (b) $p$ is unicritical }
	\label{unicrit_exacttwo}
\end{figure}

\subsection{The unicritical polynomials}
We are going to prove Theorem \ref{equal sym}(2).
 
\begin{proof}[Proof of Theorem \ref{equal sym}(2)]
Every unicritical and centered polynomial is of the form $\lambda z^n +c$ for some $\lambda,c \in \mathbb{C}\setminus \{0\}$ and $n \geq 2$. However, by Remark \ref{normalization}, without loss of generality we consider $p(z)=z^n-1$ for some $n \geq 2$. If $n=2$ then we are done by applying Theorem \ref{equal sym}(1) to $k=1$. 
\par Let $n\geq 3$. Then $L_p(z)=\frac{(n-1)(z^n-1)}{nz^n}$ and $L_{p'}(z)=\frac{n-2}{n-1}$. Therefore,  
	$$C_p(z)=\frac{(2n^2-3n+1)z^{2n}+2(2n-1)z^n-(n-1)}{2n^2z^{2n-1}}$$
	and $$C_p'(z)=\frac{(n-1)(2n-1)(z^n-1)^2}{2n^2z^{2n}}.$$
 There are $n$ immediate basins of attraction corresponding to the $n$ roots of $p$. Let $\mathcal{A}_{\omega^j}$ be the immediate basin corresponding to $\omega^j, j =0,1,2,\cdots,n-1$ where  $\omega=e^{\frac{i 2 \pi}{n}}$. 
	\par  
For   $x>1$, $C_p'(x)> 0$ and  $$C_p(x)-x=-\frac{\{(3n-1)x^n-(n-1)\}(x^n-1)}{2n^2x^{2n-1}}=-\frac{\{(n-1)(x^n-1)+2nx^n\}(x^n-1)}{2n^2x^{2n-1}}<0.$$
	Therefore  $\lim\limits_{n \to \infty} C_p^n(x) = 1$ for all  $  x \geq 1$ by Lemma~\ref{monotone}(2). In other words,  $[1, \infty)\subset \mathcal{A}_1$ and  hence $\mathcal{A}_{\omega^0}=\mathcal{A}_1$ is unbounded. As $\Sigma p=\{\lambda z: \lambda^n=1\} \subseteq \Sigma  C_p $ (by Theorem~\ref{symmetry-inclusion}), each $\mathcal{A}_{w^j}$ is unbounded. There is no critical point in $\mathcal{F}(C_p)$ other than the $n$-th roots of unity. It follows from Lemma~\ref{basic-dynamics-lemma} that $\mathcal{F}(C_p)=\bigcup\limits_{j=0}^{n-1}A_j$ where $A_j$ is the basin (not immediate basin) of attraction of $\omega^j$.
	\par 
	Let $U'$ be a Fatou component different from each immediate basin $\mathcal{A}_{\omega^j}$. Then there is a $k$ such that $C_p ^k (U')= \mathcal{A}_{\omega^j}$ for some $j=0,1,2,\cdots, n-1$. By Lemma~\ref{bounded}, $U'$ is bounded. Thus $\mathcal{A}_{\omega^j}$s are the only unbounded Fatou components of $C_p$. Note that $\sigma^j (\mathcal{A}_1)=\mathcal{A}_{\omega^j}$ where $\sigma(z)=\omega z$.
Now, it follows from   Theorem~\ref{no trans} and Lemma~\ref{symmetry-equality} that $\Sigma  C_p =\Sigma p$.
	
	\par 
 The point $0$ is the only pole of $C_p$ and is on the boundary of each $\mathcal{A}_{\omega^j}$  by Lemma~\ref{Nayak_Pal}. The unboundedness of $\mathcal{A}_{\omega^j}$s gives that the Julia component containing $0$ is unbounded. Now it follows from Lemma~\ref{connected J'set} that   $\mathcal{J}(C_p)$ is connected.
\end{proof}
\begin{Remark}
		The solutions of $L_p(z)=-2$ are the extraneous fixed points of $C_p$ and they are precisely the solutions of $z^n=\frac{n-1}{3n-1}$. All these extraneous fixed points have the same multiplier $\frac{2(2n-1)}{n-1}$ which is greater than $1$ and hence these are repelling. 
\end{Remark}
\subsection{Cubic polynomials}
 We present the proof of Theorem \ref{equal sym}(3).
\begin{proof}[Proof of Theorem \ref{equal sym}(3)]
Every cubic centered polynomial is of the form $Az^3+az+b$ for some $A,a,b\in \mathbb{C}$, $A\neq 0$. By Observation \ref{normalization_1}(1), without loss of generality we assume that $p(z)=z^3 +az +b$, where $a,b\in \mathbb{C}$. 
As $p$ is not a monomial and $\Sigma p$ is non-trivial, exactly one of  $a, b$ is zero.
\begin{enumerate}
\item For $a=0$, $p(z)$ is unicritical and is already taken care of in Theorem \ref{equal sym}(2).
\item
Let  $b=0$, i.e., $p(z)=z(z^2+a)$. In view of Remark \ref{normalization}, we assume without loss of any generality that $a=-1$ i.e., $p(z)=z(z^2-1)$. Then  $L_p(z)=\frac{6z^2(z^2-1)}{(3z^2-1)^2}$ and $L_{p'}(z)=\frac{3z^2-1}{6z^2}$. The Chebyshev's method applied to $p$ is
$$	C_p(z)=  
		\frac{z^3(15z^4-6z^2-1)}{(3z^2-1)^3}$$
and 
\begin{equation}\label{deri_d3r2}
C_p'(z)=\frac{3z^2(z^2-1)^2(15z^2+1)}{(3z^2-1)^4}.
\end{equation}

 Let $\mathcal{A}_{-1}$, $\mathcal{A}_0$ and $\mathcal{A}_1$ be the immediate basins of attraction corresponding to the superattracting fixed points $-1$, $0$ and $1$ of $C_p$ respectively.  In addition to $0, \pm 1$, the critical points of $C_p$ are multiple poles $\tau_1=-\frac{1}{\sqrt{3}}$, $\tau_2=\frac{1}{\sqrt{3}}$, each is counted twice as a critical point. The simple critical points are $\pm \frac{i}{\sqrt{15}}$. Since the poles are in the Julia set, in order to  determine the Fatou set of $C_p$ completely, we need to know the forward orbits of $\pm \frac{i}{\sqrt{15}}$. 
	\par We claim that the imaginary axis is invariant and is contained in $\mathcal{A}_0$.
	For $y\in \mathbb{R}$, $C_p(iy)=i\frac{y^3(15y^4+6y^2-1)}{(3y^2+1)^3}=i\varphi(y)$ where $$\varphi(y)=\frac{y^3(15y^4+6y^2-1)}{(3y^2+1)^3}.$$ Then $C_p^n(iy)=i\varphi^n(y)$ for all $y\in \mathbb{R}$ and $n \geq 1$.  Now, the function $\varphi$ has three real roots namely, $0$, $z_1=-\sqrt{\frac{2\sqrt{6}-3}{15}}$ and $z_2=\sqrt{\frac{2\sqrt{6}-3}{15}}$. The nonzero roots are simple, which gives that $\varphi(y)<0$ for $y\in (-\infty,z_1)\cup (0,z_2)$ and   $\varphi(y)> 0$ for $y\in (z_1,0)\cup (z_2,\infty)$. Since $C_p(iy)=i\varphi(y)$, $C_p'(iy)i=i\varphi'(y)$ and it follows from Equation \ref{deri_d3r2} that $\varphi'(y)=\frac{3y^2(y^2+1)^2(15y^2-1)}{(3y^2+1)^4}$.
		The function $\varphi$ is decreasing in $(c_1,c_2)$ as $\varphi'(y)<0$ for $y\in (c_1,c_2)$. Apart from $0$, $\varphi$ has two real critical points, namely $c_1=-\frac{1}{\sqrt{15}}$ and $c_2=\frac{1}{\sqrt{15}}$. As $\frac{2\sqrt{6}-3}{15}>\frac{1}{15}$, $z_2 >c_2$ and therefore $z_1<c_1$. The critical values are $c_1^*=\varphi(c_1)=-\frac{5c_1}{3^5}=\frac{5}{3^5\sqrt{15}}<c_2$ and $c_2^*=\varphi(c_2)=-\frac{5c_2}{3^5\sqrt{15}}=-\frac{5}{3^5\sqrt{15}}>c_1$. In other words, $[c_2^*,c_1^*]\subsetneq [c_1,c_2]$. In Fig. \ref{phi_d3}(b), the  green dots are the roots, red dots are the nonzero critical points and the blue ones are the nonzero critical values of $\varphi$.
	Note that  $|\varphi'(y)|< 3 y^2(y^2+1)^2< 3 (c_1^*)^2 ((c_1^*)^2 +1)^2< \frac{3}{4}$ for all $|y| < c_1^*$. It follows from the Contraction mapping principle that  for all   $y\in [c_2^*,c_1^*]$,   $\lim\limits_{n \to \infty} \varphi^n(y) =0$. Since, $\varphi([z_1,0))=[0,c_1^*]$, we have 
	\begin{equation}
	 \lim\limits_{n \to \infty} \varphi^n(y) =0~\mbox{for all}~  y \in [z_1,0).
	 \label{varphi_1}
	\end{equation}

	\begin{figure}[h!]
		\begin{subfigure}{.5\textwidth}
			\centering
			\includegraphics[width=0.9\linewidth]{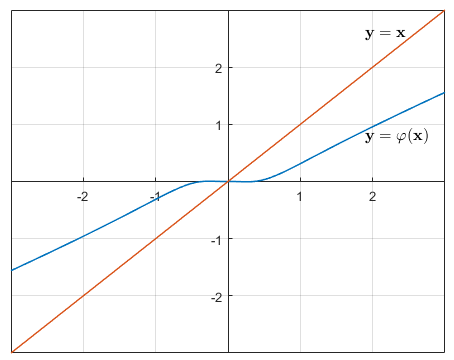}
			\caption{The graph of $\varphi$}
		\end{subfigure}
		\begin{subfigure}{.5\textwidth}
			\centering
			\includegraphics[width=0.9\linewidth]{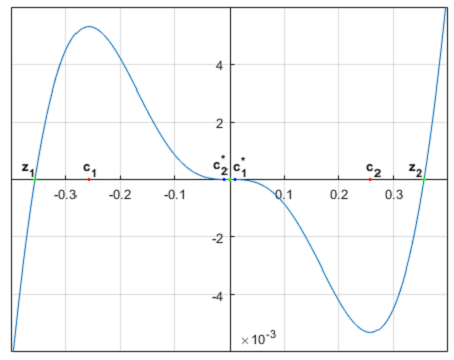}
			\caption{The zoomed image of $\varphi$ near origin }
		\end{subfigure}
		\caption{The real dynamics of $\varphi$ }
		\label{phi_d3}
	\end{figure}
 Note that
 $\varphi(y)-y=-\frac{y(12y^6+21y^4+10y^2+1)}{(3y^2+1)^3}$  and therefore, $\varphi(y)>y$ whenever $y<0$. Also $\varphi$ is increasing in $(-\infty, z_1)$. If for any $y < z_1$, $\varphi^n(y) < z_1$ for all $n$ then $\{\varphi^n(y)\}_{n>0}$ would be an increasing sequence bounded above by $z_1$ and hence must converge. Its limit must be less than or equal to $z_1$ and a fixed point of $\varphi$, which is not possible. Thus, for each $y < z_1$ there is an $n_y$ such that $\varphi^n (y) < z_1$ for all $n < n_y$ and $0> \varphi^{n_y} (y) \geq  z_1$. It now follows from Equation~\ref{varphi_1} that  $\lim\limits_{n \to \infty} \varphi^n(y) =0$ for all $y < z_1$.   As $\varphi(-y)=-\varphi(y)$ and $\varphi^n(-y)=-\varphi^n(y)$ for all  $y\in \mathbb{R}$ and $n \geq 1$, we conclude that,  $\lim\limits_{n \to \infty} \varphi^n(y) =0$ for all $y \in \mathbb{R}$. This gives that the imaginary axis is contained in $\mathcal{A}_0$ and hence $\mathcal{A}_0$ is unbounded.
 
 \par 
  Note that $C_p'(x)>0$ for each $ x< -1$ (see Equation \ref{deri_d3r2}). Also
   $$C_p(x)-x=-\frac{12x(x^2-\frac{9+\sqrt{33}}{24})(x^2-\frac{9-\sqrt{33}}{24})(x^2-1)}{(3x^2-1)^3}.$$ 
  Since  $\frac{9+\sqrt{33}}{24},\frac{9-\sqrt{33}}{24}<1$, $C_p(x)>x$ for all $x <-1$ (see Figure \ref{C_p d3r2}). 
   \begin{figure}[h!]
   	\begin{subfigure}{.5\textwidth}
   	\centering
   	\includegraphics[width=0.93\linewidth]{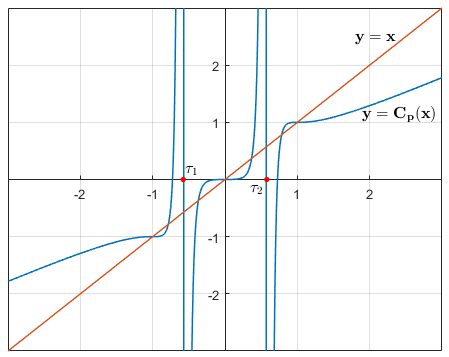}
   	\caption{The graph of $C_p$ }
   	\label{C_p d3r2}
\end{subfigure}
\begin{subfigure}{.5\textwidth}
	   	\centering
 	\includegraphics[width=0.9\linewidth]{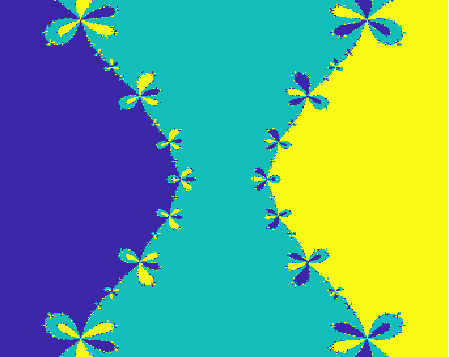}
	\caption{The Julia set of  $C_p$}
	\label{J_set_d3r2}
\end{subfigure}
\caption{ Chebyshev's method $C_p$ for $p(z)=z(z^2-1)$.}
 \end{figure}
Consequently,  $\lim\limits_{n \to \infty} C_p ^n (x)=-1$ for all $x  \leq -1$ by Lemma~\ref{monotone}(1). In other words, $(-\infty, -1] \subset \mathcal{A}_{-1}$. As $\Sigma p=\{I, z\mapsto -z\}\subseteq \Sigma C_p$ (by Theorem \ref{symmetry-inclusion}), $ [1, \infty)\subset \mathcal{A}_1$. Hence  $\mathcal{A}_{-1}$ and $\mathcal{A}_1$ are unbounded. 
\par  
All the critical points (except the poles which are in the Julia set) are in $\mathcal{A}_0 \cup \mathcal{A}_{-1} \cup \mathcal{A}_{1}$. The Fatou set of $C_p$ is the union of the three basins of attraction corresponding to $0,-1$ and $1$ by Lemma~\ref{basic-dynamics-lemma}.  It follows from Lemma \ref{bounded} that every Fatou component different from the three immediate basins are bounded.  In other words, there are exactly three unbounded Fatou components of $C_p$, namely  $\mathcal{A}_0, \mathcal{A}_{-1}$ and $\mathcal{A}_{1}$. Note that $\sigma(\mathcal{A}_{1}) =\mathcal{A}_{-1}$ where $\sigma(z)=-z$.
It follows from   Theorem~\ref{no trans} and  Lemma~\ref{symmetry-equality} that $\Sigma C_p =\Sigma p$.
\par
 By Lemma \ref{Nayak_Pal}, the boundary $\partial \mathcal{A}_0$ of $\mathcal{A}_0$ contains at least one pole. As $\mathcal{A}_0$ is symmetric about the imaginary axis, both the poles $\tau_1$ and $\tau_2$ are in $\partial \mathcal{A}_0$. By the same lemma, each of $\mathcal{A}_{-1}$ and $\mathcal{A}_{1}$ contains a pole in its boundary. Since these are separated by the imaginary axis,  $\tau_1\in \partial\mathcal{A}_{-1}$ and $\tau_2\in\partial\mathcal{A}_1$. Since $\mathcal{A}_{-1} $ and $\mathcal{A}_{0}$ are unbounded, the Julia component containing  $\tau_1$ is unbounded. Similarly, the Julia component containing  $\tau_2$  is unbounded giving that the unbounded Julia component contains both the poles. Thus by Lemma \ref{connected J'set}, $\mathcal{J}(C_p)$ is connected. 
\end{enumerate}
\end{proof}
\begin{Remark}
	\item  For $p(z)=z(z^2-1)$, the finite extraneous fixed points of $C_p$ are the solutions of $L_p(z)=-2$ i.e.,  $12z^4-9z^2+1=0.$ The solutions are $z=\pm \sqrt{\frac{9+\sqrt{33}}{24}}$ and $z=\pm \sqrt{\frac{9-\sqrt{33}}{24}}$. Since the multiplier of an extraneous fixed point $z$ is $\lambda(z)=5+\frac{1}{3z^2}$, $\lambda(\pm \sqrt{\frac{9+\sqrt{33}}{24}})=\frac{13}{2}-\frac{\sqrt{33}}{6}>1$ and $\lambda(\pm \sqrt{\frac{9-\sqrt{33}}{24}})=\frac{13}{2}+\frac{\sqrt{33}}{6}>1$. Hence, all the extraneous fixed points are repelling.
\end{Remark}
\subsection{Quartic polynomials}
Now we prove the final part of Theorem \ref{equal sym}.
\begin{proof}[Proof of Theorem~\ref{equal sym}(4)] Every quartic and centered polynomial is of the form $Az^4+az^2+bz+c$ for some $A,a,b,c\in \mathbb{C}$, $A\neq 0$. In view of Observation~\ref{normalization_1}(1), without loss of generality we assume that $p(z)=z^4+az^2+bz+c$ for some  $a,b,c\in \mathbb{C}$.
	By the hypotheses, $0$ is a root of $p$, i.e., $c=0$. Since $\Sigma p$ is non-trivial, exactly one of  $a$ and $b$ is $0$ giving that  $p(z)=z^2(z^2+a)$ or $p(z)=z(z^3+b)$.
	We provide the proofs for these two cases separately.
	\begin{enumerate}
		\item Let $p(z)=z^2(z^2+a)$. By Remark~\ref{normalization}, we assume $a=-1$, i.e., $p(z)=z^2(z^2-1)$. Then $L_p(z)=\frac{(z^2-1)(6z^2-1)}{2(2z^2-1)^2}$, $L_{p'}(z)=\frac{12z^2(2z^2-1)}{(6z^2-1)^2}$, $$C_p(z)=\frac{z(42z^6-51z^4+20z^2-3)}{8(2z^2-1)^3}$$ and  $$C_p'(z)=\frac{3(z^2-1)^2(28z^4-8z^2+1)}{8(2z^2-1)^4}.$$
In addition  to the superattracting fixed points $\pm 1$ of $C_p$ and the poles $\pm \frac{1}{\sqrt{2}}$ of $C_p$ (each of these is also a critical point of $C_p$ with multiplicity $2$), the other critical points of $C_p$ are the solutions of 
\begin{equation}\label{other cr pt}
28z^4-8z^2+1=0.
\end{equation}
This equation has no real solution because for every solution $z$, $z^2$ is non-real.
If $c$ is such a solution then so also $-c, \bar{c}$ and $-\bar{c}$. The Fatou set  $\mathcal{F}(C_p)$ of $C_p$  contains three immediate basins of attraction $\mathcal{A}_{-1}, \mathcal{A}_1$ and $\mathcal{A}_0$ corresponding to the attracting fixed points $-1,1$ and $0$ respectively. By Lemma~\ref{basic-dynamics-lemma},
  $\mathcal{A}_0$ contains a critical point of $C_p$ and that must be a solution of Equation \ref{other cr pt}. Note that $\mathcal{A}_0$ is invariant under $z\mapsto -\bar{z}$ and  therefore it contains all the four solutions of Equation \ref{other cr pt}. All the critical points except the poles (which are in the Julia set) are in   $\mathcal{A}_0 \cup \mathcal{A}_{-1} \cup \mathcal{A}_1$. By Lemma~\ref{basic-dynamics-lemma}, $\mathcal{F}(C_p)$ is the union of the basins of attraction of $0,-1$ and $1$. 
\par 
We shall show that $\mathcal{A}_0$ is unbounded by establishing  that it contains  the imaginary axis. For $y\in \mathbb{R}$,
$C_p(iy)=i\left[\frac{y(42y^6+51y^4+20y^2+3)}{8(2y^2+1)^3}\right]=i\varphi(y) $
where $$\varphi(y)=\frac{y(42y^6+51y^4+20y^2+3)}{8(2y^2+1)^3}.$$ Thus, $ C_p^n(iy)=i\varphi^n(y) $ for all $n \geq 1$.
Since $\varphi'(y)=\frac{3(y^2+1)^2(28y^4+8y^2+1)}{8(2y^2+1)^4}>0$  and  $$\varphi(y)-y=-\frac{y(22y^6+45y^4+28y^2+5)}{8(2y^2+1)^3}>0,$$ for each $y<0$, $\lim\limits_{n \to \infty} \varphi^n(y)=0$ for all $y \leq 0$ by Lemma~\ref{monotone}(1). As $\varphi(-y)=-\varphi(y)$, we also have $\lim\limits_{n \to \infty} \varphi^n(y)=0$ for all $y>0$. Therefore, the imaginary axis is contained in $\mathcal{A}_0$. 

\begin{figure}[h!]
	\begin{subfigure}{.5\textwidth}
		\centering
		\includegraphics[width=0.9\linewidth]{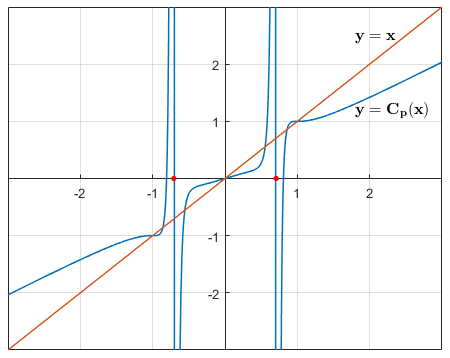}
		\caption{The graph of $C_p$}
	\end{subfigure}
	\begin{subfigure}{.5\textwidth}
		\centering
		\includegraphics[width=0.9\linewidth]{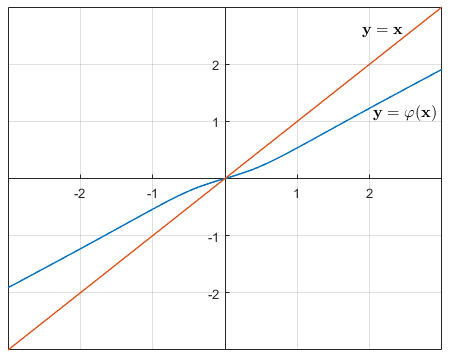}
		\caption{The graph of $\varphi$ near origin }
	\end{subfigure}
	\caption{The real dynamics}
	\label{phi_d4}
\end{figure}
\par 
In order to show that $\mathcal{A}_{-1}$ is unbounded, we now analyze $C_p$ on $(-\infty, -1)$. 
For $y < -1$, $C_p (y)-y=-y\frac{\{(22 y^2 -1)(y^2 -1)+4\} (y^2 -1)}{8 (2 y^2 -1)^3}>0$ and $C_p '(y)>0$ (as $28 y^4-8y^2+1>0$). By Lemma~\ref{monotone}(1), $\lim\limits_{n \to \infty}C_p ^n (y)=-1$ for all $y \leq -1$. In other words, $\mathcal{A}_{-1}$ is unbounded. As $\Sigma p=\{I, z\mapsto -z\}\subseteq \Sigma C_p$ (by Theorem \ref{symmetry-inclusion}), $ (1, \infty)\subset \mathcal{A}_1$. Hence  $\mathcal{A}_{-1}$ and $\mathcal{A}_1$ are unbounded.  By Lemma \ref{bounded}, every Fatou component other than the immediate basins are bounded. In other words, $\mathcal{A}_{0}, \mathcal{A}_{-1}$ and $\mathcal{A}_1$ are the only unbounded Fatou components of $C_p$.  Noting that $\sigma(\mathcal{A}_1)=\mathcal{A}_{-1}$ where $\sigma(z)=-z$, we have $\Sigma C_p = \Sigma p$ by   Theorem~\ref{no trans} and Lemma~\ref{symmetry-equality}.
\par 
That $\mathcal{J}(C_p)$ is connected follows from the same arguments used in the second case of the proof of Theorem~\ref{equal sym}(3).  
\begin{figure}[h!]
	\centering
	\includegraphics[width=0.5\linewidth]{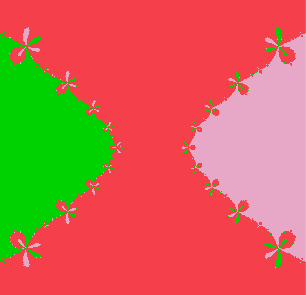}
	\caption{The Julia set of  $C_p$ for $p(z)=z^2(z^2-1)$}
	\label{J_set_d4r2}
\end{figure}
\item Let  $p(z)=z(z^3+b)$. In view of Remark~\ref{normalization}, we assume without loss of generality that $b=-1$ i.e.,  $p(z)=z(z^3-1)$. Then $L_p(z)=\frac{12z^3(z^3-1)}{(4z^3-1)^2}$, $L_{p'}(z)=\frac{4z^3-1}{6z^3}$,  
\begin{align}
&\label{C_p_exp} C_p(z)=\frac{3z^4(14z^6-4z^3-1)}{(4z^3-1)^3}\\ ~\mbox{and}~ 
&\label{deri} C_p'(z)=\frac{12z^3(z^3-1)^2(14z^3+1)}{(4z^3-1)^4}.
\end{align}
The simple roots $0, 1, \omega, \omega^2$ of $p$ and the poles of $C_p$ are critical points of $C_p$. The other critical points are  the solutions of 
\begin{equation}\label{crit}
14z^3+1=0
\end{equation}
and are simple. Note that one of these is a negative real number, say $c=-\left(\frac{1}{14}\right)^{\frac{1}{3}}$.
\par 
Now we look at the real dynamics of $C_p$. The zeros of $C_p$ on the real line are $z_1=-\left(\frac{3\sqrt{2}-2}{14}\right)^{\frac{1}{3}}$, $0$ and $z_2=\left(\frac{2+3\sqrt{2}}{14}\right)^{\frac{1}{3}}$. The extraneous fixed points of $C_p$ are the solutions of $L_p(z)=-2$  which is $22z^6-14z^3+1=0.$
If $z$ is a solution then $z^3=\frac{7+3\sqrt{3}}{22}$ and $z^3=\frac{7-3\sqrt{3}}{22}$. The real extraneous fixed points are 
 $e_1=\left(\frac{7-3\sqrt{3}}{22}\right)^\frac{1}{3}$ and $e_2=\left(\frac{7+3\sqrt{3}}{22}\right)^\frac{1}{3}$. Note that $0<e_1<e_2<1$. The real critical points are $c, 0$, $\tau=\left(\frac{1}{4}\right)^{\frac{1}{3}}$ and $1$. Then $c^*=C_p(c)=\frac{7}{108\sqrt[3]{14}}$. To establish the ordering of these points, we proceed as follows.
\begin{enumerate}
	\item Since $c^3-z_1^3=\frac{3(\sqrt{2}-1)}{14}<0$, $z_1<c$.
	\item $e_1^3-(c^{*})^3=\frac{7\times108^3\times (7-3\sqrt{3})-(7^3\times11)}{11\times 14\times 108^3}>\frac{7\times108^3-(7^3\times11)}{11\times 14\times 108^3}>0$  gives that $c^*<e_1$.
	\item $\tau^3-e_1^3=\frac{3(2\sqrt{3}-1)}{44}>0$ implies that $e_1<\tau$.
	\item As $z_2^3-\tau^3=\frac{3(2\sqrt{2}-1)}{28}>0$, we get $\tau<z_2$.
	\item $e_2^3-z_2^3=\frac{3(9-4\sqrt{2})+21(\sqrt{3}-\sqrt{2})}{11\times 14}>0$  implies that $z_2<e_2$.
\end{enumerate}
The inequations (a-e) are put together as   
\begin{equation}\label{compare}
z_1<c<0<c^*<e_1<\tau<z_2<e_2<1.
\end{equation}

Figure \ref{plot_d4r3} illustrates the graph of $C_p$ on the real line. The blue dots represent the nonzero roots  $z_1$ and $z_2$ of $C_p$, the extraneous fixed points $e_1$ and $e_2$ are shown  as magenta dots whereas the red dots represent the critical points $c,0$ and $1$, and the green dots represent the critical value $c^*$.
 \begin{figure}[h!]
	\begin{subfigure}{.5\textwidth}
		\centering
		\includegraphics[width=0.93\linewidth]{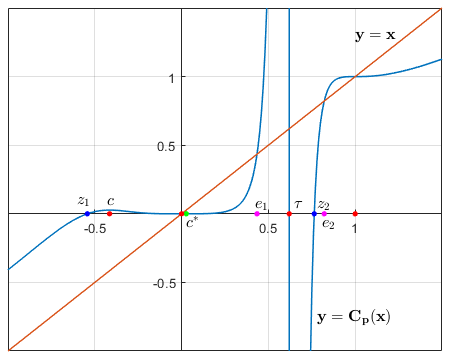}
		\caption{The graph of $C_p$ on the real axis}
		\label{plot_d4r3}
	\end{subfigure}
	\begin{subfigure}{.5\textwidth}
		\centering
		\includegraphics[width=0.9\linewidth]{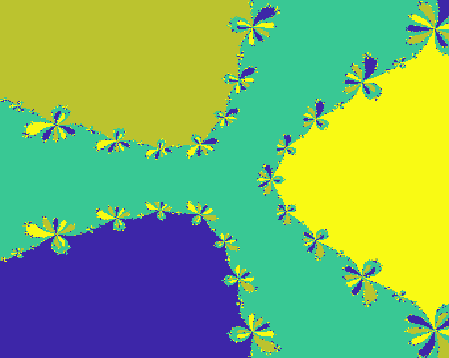}
		\caption{The Julia set of  $C_p$ for $p(z)=z(z^3-1)$}
		\label{J_set_d4r3}
	\end{subfigure}
	\caption{Chebyshev's method $C_p$ for $p(z)=z(z^3-1)$}
\end{figure}
\par 
 Note that
\begin{equation}\label{comp_x}
C_p(x)-x=-\frac{22x(x^3-1)(x^3-\frac{7-3\sqrt{3}}{22})(x^3-\frac{7+3\sqrt{3}}{22})}{(4x^3-1)^3}.
\end{equation}
It follows from Equation \ref{comp_x} that $C_p(x) > x$ for all $x <z_1$. If $C_p ^n (x) < z_1$ for all $n$ then $\{C_p ^n (x)\}_{n>0}$ would converge and the limit point must be a fixed point of $C_p$ and less than or equal to $z_1$. However this is not possible. Therefore, for each $x\in (-\infty, z_1)$, there exists a natural number $n_x$ such that $C_p ^n (x) < z_1$ for all $n < n_x$ and $C_p^{n_x}(x)\in [z_1,0)$. Since $C_p$ is strictly increasing in $(-\infty, c)$ and decreasing in $(c,0)$, $C_p([z_1,0))=[0,c^*]$.  As $C_p'(x)>0$ for all $x>0$, it is so in $[0,c^*]$. Further, $C_p(x)<x$ whenever $x\in (0,c^*]$ by Relation~\ref{compare}. Both these facts along with $C_p (0)=0$ imply that $\{C_p ^n (x)\}_{n>0}$ is a decreasing sequence which is bounded below by $0$. The Monotone convergence theorem gives  that $\lim\limits_{n \to \infty}C_p^n (x)= 0$ for all $x\in [0,c^*]$. Hence  $(-\infty, 0] \subset \mathcal{A}_0$. In particular, $c\in \mathcal{A}_0$ and $\mathcal{A}_0$ is unbounded.

 As $\Sigma p =\{z\mapsto \lambda z: \lambda^3=1\}\subseteq \Sigma C_p$, and for every $\sigma\in \Sigma p$, $\sigma(\mathcal{A}_0)=\mathcal{A}_0$  by Lemma \ref{imm_basin_preserved}. the other two critical points which are the solutions of the Equation \ref{crit} are also in $\mathcal{A}_0$. Therefore, all the critical points except the poles are in  $\mathcal{A}_0 \cup \mathcal{A}_1 \cup \mathcal{A}_{\omega} \cup \mathcal{A}_{\omega^2}$. By Lemma~\ref{basic-dynamics-lemma}, $\mathcal{F}(C_p)$ is the union of the basins  (not only immediate basins) of attraction of the four superattracting fixed points $0,1,\omega$ and $\omega^2$.
  \par 
Note that $C_p '(x)>0$ for each $x >1$ and from Equation \ref{comp_x} we get $C_p(x)<x$ for each $x > 1$. By Lemma~\ref{monotone}(2), $\lim\limits_{n \to \infty} C_p ^n (x)=1$ for all $x  \geq 1$. Thus, $[1, \infty) \subset \mathcal{A}_1$ and $\mathcal{A}_1$ is unbounded. Again, as $\sigma_1(\mathcal{A}_1)=\mathcal{A}_\omega$ and $\sigma_2(\mathcal{A}_1)=\mathcal{A}_{\omega^2}$ for $\sigma_1(z)=e^{\frac{2\pi i}{3}}z$ and $\sigma_2(z)=e^{\frac{4\pi i}{3}}z$, $\mathcal{A}_\omega$ and $\mathcal{A}_{\omega^2}$ are also unbounded.  All the Fatou components different from the immediate basins of the fixed points $0, 1, \omega$ and $\omega^2$ are bounded by Lemma~\ref{bounded}. In other words, the only unbounded Fatou components of $C_p$ are $\mathcal{A}_0, \mathcal{A}_1,  \mathcal{A}_{\omega}$ and $ \mathcal{A}_{\omega^2}$. Note that $\sigma^j(\mathcal{A}_1)=\mathcal{A}_{\omega^j}$ for $j=1,2$ where $\sigma(z)=\omega z$. By   Theorem~\ref{no trans} and  Lemma~\ref{symmetry-equality}, $ \Sigma C_p = \Sigma p$.
 
  The boundary of $\mathcal{A}_0$ contains a pole by Lemma~\ref{Nayak_Pal}. As the set of all poles of $C_p$ as well as $\mathcal{A}_0$ are invariant under every $\sigma \in \Sigma C_p$, all poles of $C_p$ are on $\partial \mathcal{A}_0$. Since  $(-\infty, 0] \subset \mathcal{A}_{0}$, it also follows that the  two rays emanating from the origin with arguments  $\frac{\pi}{3}$ and   $\frac{5\pi}{3}$ are contained in $\mathcal{A}_0$. These rays (and hence $\mathcal{A}_0$) separate the immediate basins  $\mathcal{A}_1, \mathcal{A}_\omega$ and $\mathcal{A}_{\omega^2}$ from each other.

  By Lemma~\ref{Nayak_Pal}, each of the immediate basins $\mathcal{A}_1, \mathcal{A}_\omega$ and $\mathcal{A}_{\omega^2}$ contains a pole in its boundary. These poles are distinct. In fact,  $\tau\in \partial\mathcal{A}_1$, $\tau e^{\frac{2\pi i}{3}}\in \partial\mathcal{A}_\omega$ and $\tau e^{\frac{4\pi i}{3}}\in \partial\mathcal{A}_{\omega^2}$.
Observe that each pole is in the boundary of two unbounded Fatou components. This means that the unbounded Julia component contains all the poles. By Lemma~\ref{connected J'set}, the Julia set of $\mathcal{J}(C_p)$ is connected.
 
	\end{enumerate}
\end{proof}
\begin{Remark} All the extraneous fixed points of $C_p$ are repelling for $p$ satisfying the hypothese of Theorem~\ref{equal sym}(4).
	\begin{enumerate}
		\item 
 For $p(z)=z^2 (z^2-1)$, the extraneous fixed points of $C_p$ are the solutions of $L_p(z)=-2$ which are nothing but the solutions of $22z^4-23z^2+5=0$. The solutions  are $z=\pm \sqrt{\frac{23+\sqrt{89}}{44}}$ and $z=\pm \sqrt{\frac{23-\sqrt{89}}{44}}$. As $ \frac{23+\sqrt{89}}{44},\frac{23-\sqrt{89}}{44}<1 $, all the extraneous fixed points are in $(-1,1)$. The multiplier $\lambda(z)$ of an extraneous fixed point $z$ is given by the formula $\lambda(z)=2(3-L_{p'}(z))=6\left[1-\frac{4z^2(2z^2-1)}{(6z^2-1)^2}\right]=6-\alpha(z^2)$, where $\alpha(z)=\frac{24z(2z-1)}{(6z-1)^2}$. Now $\alpha(\frac{23+\sqrt{89}}{44})=\frac{7623\sqrt{89}-49731}{30976}<1$ and  $\lambda(z=\pm \sqrt{\frac{23+\sqrt{89}}{44}})=6-\alpha(\frac{23+\sqrt{89}}{44})>1$. Similarly $\alpha(\frac{23-\sqrt{89}}{44})=-\frac{7623\sqrt{89}+49731}{30976}$ gives that  $\lambda(z=\pm \sqrt{\frac{23-\sqrt{89}}{44}})>1$.
	\item For $p(z)=z(z^3 -1)$, the extraneous fixed points, we need to solve $L_p(z)=-2$ which gives the equation
	$22z^6-14z^3+1=0.$
	The solutions are $z^3=\frac{7+3\sqrt{3}}{22}$ and $z^3=\frac{7-3\sqrt{3}}{22}$. The multiplier of an extraneous fixed point $z$ is given by $\lambda(z)=\frac{14z^3+1}{3z^3}$. Those satisfying $z^3=\frac{7+3\sqrt{3}}{22}$ has multiplier $7-\sqrt{3}>1$, whereas the multiplier of those extraneous fixed points satisfying  $z^3=\frac{7-3\sqrt{3}}{22}$ is $7+\sqrt{3}>1$.

	\end{enumerate}
\end{Remark}
\section{Concluding remarks }
We conclude with following remarks.
\begin{enumerate}
\item Recall from Remark \ref{equal_condition} that the Theorem \ref{symmetry-inclusion} is true for every polynomial $p$ with centroid $\xi\neq 0$, whenever the symmetry groups of $p$ and $p\circ T$ are isomorphic, where $T(z)=z+\xi$. Since we are concerned with those $p$ for which $p\circ T$ is not a monomial, $\Sigma p\circ T$ is finite. This gives that $\Sigma p$ is isomorphic to $\Sigma p\circ T$ if and only if they have the same order. We now provide a sufficient criterion for which $\Sigma p$ and $\Sigma p\circ T$ are isomorphic.
\begin{lemma}\label{iso_sym}
	If $p$ is a polynomial with centroid $\xi$, $p(\xi)\neq 0$, and $p(\xi)\neq \xi$ then $\Sigma p$ and $\Sigma p\circ T$ are isomorphic, where $T(z)=z+\xi$.
\end{lemma}
\begin{proof}
	Without loss of generality we consider $p$ is monic. Let $p(z)=z^d+a_{d-1}z^{d-1}+\dots +a_1z+a_0$. Then $\xi=-\frac{a_{d-1}}{d}$. Considering $g(z)=p\circ T(z)$, we have
	\begin{align*}
	g(z)=&(z+\xi)^d+a_{d-1}(z+\xi)^{d-1}+\dots+a_0\\
	=&z^d+d\xi z^{d-1}+\dots+\xi^d\\
	+&a_{d-1}z^{d-1}+\dots +a_{d-1}\xi^{d-1}\\
	\vdots\\
	+&a_1\xi+a_0
	\end{align*}
	Thus, $g(z)=z^d+b_{d-2}z^{d-2}+\dots+p(\xi)$ for some $b_{d-j}\in \mathbb{C}$ where $j=2,3,\dots, d-1$.\\
	Now
	\begin{align*}
	f(z)&=T^{-1}(p(T(z)))=g(z)-\xi\\
	&=z^d+b_{d-2}z^{d-2}+\dots+p(\xi)-\xi.
	\end{align*} 
	Note that the constant terms of $g$ and $f$ are non-zero by the assumptions. As both $f$ and $g$ are normalized polynomial differing by the constant terms only, $\Sigma f=\Sigma g$. By Lemma \ref{Cg}, $\Sigma f=T^{-1}(\Sigma p)T$. Hence the orders of $\Sigma p$ and $\Sigma g$ are equal.
\end{proof}
If a unicritical polynomial is of the form $p(z)=\lambda(z-a)^n+c$ where $a,c$ are non-zero with $a\neq c$, then its centroid $a$ is neither a root nor a fixed point of it. It follows from Lemma \ref{iso_sym} that $\Sigma p$ and $\Sigma g$ are of the same order, where $g(z)=p(z+a)$. Since $g$ is unicritical and centered, we have $\Sigma g= \Sigma C_g$ by Theorem \ref{equal sym}(2). Now $C_p$ is conjugate to $C_g$ since the Chebyshev's method satisfies the Scaling theorem. Therefore, we have the following.
\begin{theorem}
	If $p$ is a unicritical polynomial whose centroid is neither a root nor a fixed point of it then $\Sigma p=\Sigma C_p$.
\end{theorem}
This is a refinement of Theorem \ref{equal sym}(2).
\par 
Note that the conditions in Lemma \ref{iso_sym} are not necessary. For example, consider $p(z)=z^4-4z^3+7z^2-6z+3$ whose centroid $\xi=1$. Then $g(z)=p(z+\xi)=z^4+z^2+1$ and $f(z)=g(z)-\xi=z^2(z^2+1)$. Therefore $\Sigma g=\Sigma f$ and thus $o(\Sigma p)=o(\Sigma g)$. However, $p(1)=1$.
\item For a rational map $R$, let $\mathcal{P}_R=\bigcup\limits_{c} \{R^n(c): n \geq 0
~\mbox{and}~ c\text{ is a critical point of } R\}$. If $\overline{\mathcal{P}_R}\cap \mathcal{J}(R)$ is finite then $R$ is said to be a \textit{geometrically finite map}.  It follows from Lemma~\ref{basic-dynamics-lemma} that geometrically finite maps donot contain any Siegel disk  or Herman ring. 
In all the cases considered in Theorem~\ref{equal sym},  all the critical points, except the poles are in the basins of attraction of the (super)attracting fixed points of $C_p$ and the poles are in the Julia set of $C_p$. Hence $C_p$ is a geometrically finite map. In particular, for cubic $p$ it is shown that if $\Sigma p$ is non-trivial then $C_{p}$ is geometrically finite. In this case, all the extraneous fixed points are found to be repelling. In \cite{Nayak-Pal2022}, it is found that $C_{p_\lambda}$ is geometrically finite for $p_{\lambda}(z)=z^3+3z+\frac{3\lambda^2-39\lambda+124}{(5-\lambda)\sqrt{5-\lambda}}$ whenever $\lambda\in [-1,1]$ and in this case an extraneous fixed point is either attracting or parabolic. The number $\lambda$ indeed represents the multiplier of an extraneous fixed point of $C_{p_\lambda}$. It can be suitably choosen so that $C_{p_\lambda}$ has an invariant Siegel disk and hence $C_p$ is not geometrically finite. With all these observations, a complete characterization of all the cubic polynomials whose Chebyshev's method is   geometrically finite becomes a  relevant question.  
\par In all the cases stated in the previous paragraph, the Julia set of the Chebyshev's method is found to be connected. In \cite{Lei96} Lei and Yongcheng prove that the Julia set of a geometrically finite rational map is locally connected whenever it is connected. Thus  the Julia set is locally connected in all the aforementioned situations. It also  seems important to determine all the cubic polynomials $p$ for which $\mathcal{J}(C_p)$ is connected irrespective of whether $C_p$ is geometrically finite or not. The same issue for polynomials of higher degree also remains to be investigated.

\item That $\Sigma p$ is non-trivial is crucial in the proof of each case of Theorem~\ref{equal sym}. Though $\Sigma C_p$ is completely understood for all cubic polynomials with non-trivial $\Sigma p$, the case of quartic polynomials remains incomplete. In particular, polynomials of the form $p(z)=z^4+bz^2+c $ where $ b, c \neq 0$   is not studied in this paper. It can be seen that this is a one-parameter family namely $p(z)=z^4-(a+1)z^2+a$, $a\in \mathbb{C}\setminus \{0,1\}$.  A similar study of $\Sigma C_p$ seems possible using the tools developed in this article.
\par 
Understanding $\Sigma C_p$ is also an interesting problem when $p$ is cubic or quartic and  $\Sigma p$ is trivial.
\end{enumerate}
\section*{Acknowledgement}
The second author is supported by the University Grants Commission, Govt. of India.


\begin{thebibliography}{100}
	\bibitem{ABP2004} Amat, S., Busquier, S., Plaza, S., Review of some iterative root–finding methods from a dynamical point of view, Sci. Ser. A Math. Sci. (N.S.) 10 (2004), 3–35.
		
\bibitem{Beardon_book} Beardon, A.F., Iteration of Rational Functions, Grad. Texts in Math. 132, Springer-Verlag, 1991.
      
\bibitem{Boyd2000} Boyd, D., Translation invariant Julia sets, Proc. Amer. Soc. 128 (2000), no. 3, 803-812.
		
\bibitem{BH2003} Buff, X., Henriksen, C., On K\"{o}nig's root-finding algorithms, Nonlinearity, 16 (2003), no. 3, 989-1015.
		
\bibitem{CCV} Campos, B., Canela, J., Vindel, P., Connectivity of the Julia set for the Chebyshev-Halley family on degree $n$ polynomials, Commun. Nonlinear Sci. Numer. Simul. 82 (2020). 
		
 \bibitem{Olivo2017} Garc\'{i}a-Olivo, M., Guti\'{e}rrez, J.M., Magre\~{n}\'{a}n,  \'{A}.A., A first overview on the real dynamics of Chebyshev's method, J. Comput. Appl. Math. 318(2017) 422-432.
		
\bibitem{GV2020} Guti\'{e}rrez, J.M., Varona, J.L., Superattracting extraneous fixed points and $n$-cycles for Chebyshev's method on cubic polynomials, Qual. Theory Dyn. Syst. 19 (2020), no. 2, Paper No. 54.
		
\bibitem{Kneisl} Kneisl, K., Julia sets for the super-Newton method, Cauchy's method, and Halley's method, Chaos 11 (2001), no. 2, 359–370.
	
	\bibitem{Lei96} Lei, T., Yongcheng, Y.,  Local Connectivity of the Julia sets for geometrically finite rational maps, Sci. China Ser. A 39 (1996), no. 1, 39–47.
\bibitem{Liu-Gao2015} Liu, G., Gao, J., Symmetries of the Julia sets of K\"{o}nig's methods for polynomials, J. Math. Anal. Appl. 432 (2015), no. 1, 356–366.
	
	\bibitem{Nayak-Pal2022} Nayak, T., Pal, S., The Julia sets of Chebyshev's method with small degrees, Nonlinear Dyn (2022). https://doi.org/10.1007/s11071-022-07648-4.
	
	\bibitem{Yang2010} Yang, W., Symmetries of the Julia sets of Newton's method for multiple root, Appl. Math. Comput. 217 (2010), no. 6, 2490–2494.
\end{thebibliography}
\end{document}